\newcommand{\Th}{\mathcal{T}_h}
\begin{document}

\title{Time and space adaptivity of the wave equation discretized in time by a second order scheme} 
\shorttitle{Time and space adaptivity of the wave equation}

\author{
  Olga Gorynina,\thanks{Laboratoire de Math\'{e}matiques de Besan\c{c}on, Univ. Bourgogne Franche-Comt\'{e}, 16 route de Gray, 25030 Besan\c{c}on Cedex, France. Email: olga.gorynina@univ-fcomte.fr.}
  Alexei Lozinski,\thanks{Laboratoire de Math\'{e}matiques de Besan\c{c}on, Univ. Bourgogne Franche-Comt\'{e}, 16 route de Gray, 25030 Besan\c{c}on Cedex, France. Email: alexei.lozinski@univ-fcomte.fr.}
  and
  Marco Picasso\thanks{Institute of Mathematics, Ecole Polytechnique F\'{e}d\'{e}rale de Lausanne, Station 8, CH 1015, Lausanne, Switzerland. Email: marco.picasso@epfl.ch.}
}
\shortauthorlist{O. Gorynina, A. Lozinski, and M. Picasso}

\maketitle

\begin{abstract}
{
The aim of this paper is to obtain a posteriori error bounds of optimal order in time and space for the linear second-order wave equation discretized by the Newmark scheme in time and the  finite element method in space. Error estimate is derived in the $L^{\infty}$-in-time/energy-in-space norm. Numerical experiments are reported for several test cases and confirm equivalence of the proposed estimator and the true error.
}
{  
a posteriori error bounds in time and space, wave equation, Newmark scheme
}
\end{abstract}

\section{Introduction}

A posteriori error analysis of finite element approximations for partial differential equations plays an important role in mesh adaptivity techniques. The main aim of a posteriori error analysis is to obtain suitable error estimates computable using only the approximate solution given by the numerical method. The cases of elliptic and parabolic problems are well studied in the literature (for the parabolic case, we can cite, among many others \cite{ ErikssonJohnson, AMN, LPP, LakkisMakridakisPryer}). On the contrary, the a posteriori error analysis for hyperbolic equations  of second order in time is much less developed. Some a posteriori bounds are proposed in \cite{BS, Georgoulis13} for the wave equation using the Euler discretization in time, which is however known to be too diffusive and thus rarely used for the wave equation. More popular schemes, i.e. the leap-frog and cosine methods, are studied in \cite{Georgoulis16} but only the error caused by discretization in time is considered. On the other hand, error estimators for the space discretization only are proposed in \cite{Picasso10, adjerid2002posteriori}. Goal-oriented error estimation and adaptivity for the wave equation were developed in \cite{bangerth2010adaptive, bangerth2001adaptive, bangerth1999finite}.

The motivation of this work is to obtain a posteriori error estimates of optimal order in time and space for the fully discrete wave equation in energy norm discretized with the Newmark scheme in time (equivalent to a cosine method as presented in \cite{Georgoulis16}) and  with finite elements in space. We adopt the particular choice for the parameters in the Newmark scheme, namely $\beta=1/4 $, $\gamma=1/2$. This choice of parameters is popular since it provides a conservative method with respect to the energy norm, cf.  \cite{bathe1976numerical}. Another interesting feature of this variant of the method, which is in fact essential for our analysis, is the fact that the method can be reinterpreted as the Crank-Nicolson discretization of the reformulation of the governing equation in the first-order system, as in \cite{Baker76}. We are thus able to use the techniques stemming  from a posteriori error analysis for the Crank-Nicolson discretization of the heat equation in \cite{LPP}, based  on a piecewise quadratic polynomial in time reconstruction of the numerical solution.   This leads to
optimal a posteriori error estimate in time and also allows us to easily  recover the estimates in space. The resulting estimates are referred to as the $3$-point estimator since our quadratic reconstruction is drawn through the values of the discrete solution at 3 points in time. The reliability of 3-point estimator is proved theoretically for general regular meshes in space and non-uniform meshes in time. It is also illustrated by numerical experiments. 

We do not provide a proof of the optimality (efficiency) of our error estimators in space ans time. However, we are able to prove that the time estimator is of optimal order  at least on sufficiently smooth solutions, quasi-uniform meshes in space and uniform meshes in time. The most interesting finding of this analysis is the crucial importance of the way in which the initial conditions are discretized (elliptic projections): a straightforward discretization, such as the nodal interpolation, may ruin the error estimators while providing quite acceptable numerical solution.
Numerical experiments confirm these theoretical findings and demonstrate that our error estimators are of optimal order in space and time, even in situation not accessible to the current theory (non quasi-uniform meshes, not constant time steps). This gives us the hope that our estimators can be used to construct an adaptive algorithm in both time and space. 

The outline of the paper is as follows.  We present the governing equations, the discretization  and a priori error estimates in Section \ref{section2}. In Section \ref{section3}, an a posteriori error estimate is derived and some considerations concerning the optimality of time estimators are given. Numerical results are analysed in Section \ref{section4}.


\section{The Newmark scheme for the wave equation and a priori error analysis}\label{section2}

We consider initial boundary-value problem for the wave equation. Let $\Omega$ be a bounded domain in $\mathbb{R}^2$ with boundary $\partial \Omega$ and $T>0$ be a given final time. Let $u=u(x,t) : \Omega\times\left[0,T \right]\to\mathbb{R}$ be the solution to
\begin{equation}
   \begin{cases}
      \cfrac{\partial^2 u}{\partial t^2}-\Delta u=f,&\mbox{in}~ \Omega\times\left]0,T     \right],\\
      u=0,&\mbox{on}~ \partial\Omega\times\left]0,T \right],\\
      u(\cdot,0)=u_0,&\mbox{in}~\Omega,\\
      \cfrac{\partial u}{\partial t}(\cdot,0)=v_0,&\mbox{in}~\Omega,
   \end{cases}
   \label{wave}
\end{equation}
where $f,u_0,v_0$ are given functions. Note that if we introduce the auxiliary unknown $v=\frac{\partial u}{\partial t}$ then model (\ref{wave}) can be rewritten as the following first-order in time system 
\begin{equation}
   \begin{cases}
      \cfrac{\partial u}{\partial t}-v=0,  &\mbox{in}~ \Omega\times\left]0,T \right], \\
      \cfrac{\partial v}{\partial t}-\Delta u=f,  &\mbox{in}~ \Omega\times\left]0,T \right], \\
      u=v=0,&\mbox{on}~ \partial\Omega\times\left]0,T \right]  ,\\
      u(\cdot,0)=u_0,~v(\cdot,0)=v_0,~&\mbox{in}~\Omega.
      \label{syst}
   \end{cases}
\end{equation}
The above problem (\ref{wave}) has the following weak formulation, cf. \cite{evans2010partial}: for given  \newline 
$f\in L^{2}(0,T;L^2(\Omega))$, $u_0\in H^1_0(\Omega)$ and $v_0\in L^2(\Omega)$ find a function 
\begin{equation}
   u\in L^{2}\left(0,T;H^1_0(\Omega)\right),~\cfrac{\partial u}{\partial t}\in L^{2}\left(0,T;L^2(\Omega)\right),~\cfrac{\partial^2 u}{\partial t^2}\in L^{2}\left(0,T;H^{-1}(\Omega)\right)
\end{equation}
such that $u(x,0)=u_0$ in $H^1_0(\Omega)$, $\cfrac{\partial u}{\partial t}(x,0)=v_0$ in $L^2(\Omega)$ and
\begin{equation}
   \left\langle\cfrac{\partial^2 u}{\partial t^2},\varphi\right\rangle+\left(\nabla u,\nabla \varphi\right)=\left(f,\varphi\right),~\forall\varphi  \in H^1_0(\Omega),
   \label{weakwave}
\end{equation}
where $\left\langle \cdot, \cdot \right\rangle$ denotes the duality pairing between $ H^{-1}(\Omega)$ and $ H^1_0(\Omega)$ and the parentheses $( \cdot, \cdot)$ stand for the inner product in
$L^2 ( \Omega)$. 
Following Chap. 7, Sect. 2, Theorem 5 from \cite{evans2010partial}, we observe that in fact 
\begin{equation*}
   u\in C^{0}\left(0,T;H^1_0(\Omega)\right),~\cfrac{\partial u}{\partial t}\in C^{0}\left(0,T;L^2(\Omega)\right),~\cfrac{\partial^2 u}{\partial t^2}\in C^{0}\left(0,T;H^{-1}(\Omega)\right).
\end{equation*}
Higher regularity results with more regular data are also available in \cite{evans2010partial}.

Let us now discretize (\ref{wave}) or, equivalently, (\ref{syst}) in space using the finite element method and in time using an appropriate marching scheme. We thus introduce a regular mesh $\mathcal{T}_h$ on $\Omega$ with triangles $K$, $\mathrm{diam}~K=h_{K}$, $h=\max_{K\in\Th}h_K$, internal edges $E\in\mathcal{E}_h$, where $\mathcal{E}_h$ represents the internal edges of the mesh $\mathcal{T}_h$  and the standard finite element space ${V}_h\subset H^1_0(\Omega ) $:
$$
V_h=\left\{v_h\in C(\bar\Omega):v_h|_K\in \mathbb{P}_1~\forall K\in\mathcal{T}_h \text{ and }v_h|_{\partial\Omega}=0\right\}.
$$
Let us also introduce a subdivision of the time interval $[0,T]$
$$0=t_0<t_1<\dots<t_N=T$$
with time steps $\tau_n=t_{n+1}-t_n$ for $n=0,\ldots,N-1$ and $\tau=\displaystyle\max_{0 \leq n \leq N-1}\tau_n$ . Following \cite{Baker76}, by applying Crank-Nicolson discretization to both equations in (\ref{syst}) we get a second order in time scheme. The fully discretized method is as follows: taking $u^0_h,v^0_h\in V_h$ as some approximations to $u_0,v_0$ compute $u^n_h,v^n_h\in V_h$ for $n=0,\ldots,N-1$ from the system
\begin{align}
\label{CNh1}
\frac{u_h^{n + 1} - u_h^n}{\tau_n} - \frac{v_h^n + v_h^{n+1}}{2} &= 0,
\\
\label{CNh2}
\left( \frac{v^{n + 1}_h - v^n_h}{\tau_n}, \varphi_h \right) + \left( \nabla \frac{u^{n + 1}_h + u^n_h}{2}, \nabla \varphi_h \right) &= \left(  \frac{f^{n+1} + f^n}{2}, \varphi_h \right), \hspace{1em} \forall \varphi_h \in V_h.
\end{align}
From here on, $f^n$ is an abbreviation for $f ( \cdot,t_n)$.

Note that we can eliminate $v_h^n$ from (\ref{CNh1})-(\ref{CNh2}) and rewrite the scheme (\ref{CNh1})-(\ref{CNh2}) in terms of $u_h^n$ only. This results in the following method:
given approximations $u^0_h, v^0_h \in V_h$ of $u_0, v_0$ compute $u^1_h \in V_h$ from
\begin{equation}
    \label{Newm1}\left( \frac{u^1_h - u^0_h}{\tau_0}, \varphi_h \right) + \left( \nabla\frac{\tau_0 (u^1_h + u^0_h)}{4}, \nabla \varphi_h \right) 
    = \left( v_h^0 +\frac{\tau_0}{4} (f^1 + f^0), \varphi_h \right), \quad \forall\varphi_h \in V_h
\end{equation}
and then compute $u^{n+1}_h \in V_h$ for $n = 1, \ldots, N-1$ from equation
\begin{align}
   \left( \frac{u_h^{n + 1} - u_h^n}{\tau_n} - \frac{u_h^n - u_h^{n -1}}{\tau_{n - 1}}, \varphi_h \right) + \left( \nabla \frac{\tau_n (u_h^{n + 1}+ u_h^n) + \tau_{n - 1} (u_h^n + u_h^{n - 1})}{4}, \nabla \varphi_h \right)&\notag\\
   =\left( \frac{\tau_n (f^{n + 1} + f^n) + \tau_{n - 1} (f^n + f^{n - 1})}{4},\varphi_h \right), \hspace{1em} \forall \varphi_h \in V_h.&
   \label{Newm2}
\end{align}
This equation is derived by multiplying (\ref{CNh2}) by $\tau_n/2$, doing the same at the previous time step, taking the sum of the two results and observing 
$$
\frac{v_h^{n+1}-v_h^{n-1}}{2}
=\frac{v_h^{n+1}-v_h^{n}}{2}+\frac{v_h^{n}-v_h^{n-1}}{2}
=\frac{u_h^{n + 1} - u_h^n}{\tau_n} - \frac{u_h^n - u_h^{n -1}}{\tau_{n - 1}}
$$
by (\ref{CNh1}).

We have thus recovered the Newmark scheme (\cite{newmark1959, RT}) with coefficients $\beta =
1/4, \gamma = 1/2$ as applied to the wave equation
(\ref{wave}). Note that the presentation of this scheme in {\cite{newmark1959}} and in
the subsequent literature on applications in structural mechanics is a little
bit different, but the present form (\ref{Newm1})-(\ref{Newm2}) can be found, for example, in {\cite{RT}}. It is easy to see that for any $u^0_h, v^0_h \in V_h  $, both schemes (\ref{CNh1})-(\ref{CNh2}) and (\ref{Newm1})-(\ref{Newm2})
provide the same unique solution $u^n_h, v^n_h \in V_h$ for $n = 1, \ldots, N$.
In the case of scheme (\ref{Newm1})-(\ref{Newm2}), $v^n_h$ can be reconstructed from $u^n_h$ recursively with the formula
\begin{equation}\label{vhform}
     v_h^{n + 1} = 2 \frac{u_h^{n + 1} - u_h^n}{\tau_n} - v_h^n.
\end{equation}

From now on, we shall use the following notations
\begin{align}\label{notation}
u_h^{n+1/2}& :=\frac{u_h^{n+1}+u_h^{n}}{2},
\quad
\partial _{n+1/2}u_h:=\frac{u_h^{n+1}-u_h^{n}}{\tau_n},
\quad
\partial _{n}u_h:=\frac{u_h^{n+1}-u_h^{n-1}}{\tau_n+\tau_{n-1}} \\
\notag \partial _{n}^{2}{u_h}&:=\frac{1}{\tau_{n-1/2}}\left(\frac{u_h^{n+1}-u_h^{n}}{\tau_n}-\frac{u_h^{n}-u_h^{n-1}}{\tau_{n-1}}\right)
\text{ with } \tau_{n-1/2}:=\frac{\tau_n+\tau_{n-1}}{2}
\end{align}
We apply this notations to all quantities indexed by a superscript, so that, for example, $f^{n+1/2}=({f^{n+1}+f^n})/{2}$.
We also denote $u (x,t_n)$, $v(x,t_n)$ by $u^n$, $v^n$ so that, for example, $u^{n+1/2}=\left({u^{n+1}+u^n}\right)/{2}=\left(u(x,t_{n+1})+u(x,t_n)\right)/{2}$. 

We turn now to a priori error analysis for the scheme (\ref{CNh1})-(\ref{CNh2}). We shall measure the error in the following norm
\begin{equation}\label{EnNorm}
u \mapsto \max_{t\in[0,T]}\left(\left\|\cfrac{\partial u}{\partial t} (t)\right\|_{L^2(\Omega)}^{2}+\left\vert u(t)\right\vert^{2}_{H^1(\Omega)}\right) ^{1/2}.
\end{equation}
Here and in what follows, we use  the notations $u(t)$ and $\cfrac{\partial u}{\partial t} (t)$ as a shorthand for, respectively, $u(\cdot,t)$ and $\cfrac{\partial u}{\partial t} (\cdot,t)$. The norms and semi-norms in Sobolev spaces $H^k(\Omega)$ are denoted, respectively, by $\|\cdot\|_{H^k(\Omega)}$ and $|\cdot|_{H^k(\Omega)}$.
We call (\ref{EnNorm}) the energy norm referring to the underlying physics of the studied phenomenon. Indeed, the first term in (\ref{EnNorm}) may be assimilated to the kinetic energy and the second one to the potential energy.

Note that a priori error estimates for scheme (\ref{CNh1})-(\ref{CNh2})  can be found in \cite{Baker76, dupont19732, RT}. We are going to construct a priori error estimates following the ideas of \cite{Baker76} but we measure the error in a different norm, namely the energy norm (\ref{EnNorm}), and present the estimate in a slightly different manner, foreshadowing the upcoming a posteriori estimates.
\begin{theorem}\label{lemma}
  Let $u$ be a smooth solution of the wave equation (\ref{wave}) and $u_{h}^{n}$, $v_{h}^{n}$ be the discrete solution of the scheme (\ref{CNh1})-(\ref{CNh2}). If $u_0\in H^2(\Omega)$, $v_0\in H^1(\Omega)$ and the approximations to the initial conditions are chosen such that
  $\| v^0_h - v_0 \|_{L^2(\Omega)}  \le Ch| v_0 |_{H^1(\Omega)} $ and
  $| u^0_h - u_0 |_{H^1(\Omega)}  \le Ch | u_0 |_{H^2(\Omega)}$, then the following a priori error estimate holds
  \begin{multline}
\label{apriori}    
\max_{0\le n \le N}\left( \left\Vert v^n_h - \cfrac{\partial u}{\partial t} (t_n)\right\Vert_{L^2(\Omega)}^2 + | u^n_h - u (t_n) |^2_{H^1(\Omega)}\right)^{1/2} \\
\leq 
    Ch\left(| v_0 |_{H^1(\Omega)} + | u_0 |_{H^2(\Omega)}\right)
    \\
    + C\sum_{n = 0}^{N - 1} \tau^2_n \left(
    \int_{t_n}^{t_{n + 1}} \left\vert \cfrac{\partial^3 u }{\partial t^3} \right\vert_{H^1(\Omega)} {dt} +
    \int_{t_n}^{t_{n + 1}} \left\Vert \cfrac{\partial^4 u }{\partial t^4} \right\Vert_{L^2(\Omega)} {dt} \right)
    \\
    + C h  \left( \int_{t_0}^{t_N} \left\vert \cfrac{\partial^2 u }{\partial t^2} \right\vert_{H^1(\Omega)} {dt}
        + \sum_{n=0}^{N}  \tau_n'\left\vert \cfrac{\partial u}{\partial t} (t_n)\right\vert_{H^2(\Omega)} 
    + \left\vert \cfrac{\partial u }{\partial t}(t_N) \right\vert_{H^1(\Omega)} + | u ( t_N) |_{H^2(\Omega)}
    \right) 
  \end{multline}
 with a constant $C>0$ depending only on the regularity of the mesh $\mathcal{T}_h$. We have set here $\tau_n'=\tau_{n-1/2}$ for $1<n<N-1$ and $\tau_0'=\tau_0$, $\tau_N'=\tau_N$.
\end{theorem}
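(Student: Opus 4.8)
The plan is to follow the classical elliptic-projection plus energy argument of \cite{Baker76}, adapted to the energy norm (\ref{EnNorm}). Let $R_h:H^1_0(\Omega)\to V_h$ be the elliptic (Ritz) projection defined by $(\nabla(R_hw-w),\nabla\varphi_h)=0$ for all $\varphi_h\in V_h$, for which the standard estimates $|R_hw-w|_{H^1(\Omega)}\le Ch|w|_{H^2(\Omega)}$ and $\|R_hw-w\|_{L^2(\Omega)}\le Ch|w|_{H^1(\Omega)}$ hold. I would split the errors as $u^n_h-u^n=\theta^n-\rho^n$ and $v^n_h-v^n=\eta^n-\zeta^n$, where $\theta^n=u^n_h-R_hu^n$ and $\eta^n=v^n_h-R_hv^n$ are the discrete parts while $\rho^n=(I-R_h)u^n$ and $\zeta^n=(I-R_h)v^n$ are the projection parts, the latter being controlled directly by the estimates above. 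The crux is to bound $\theta^n,\eta^n$.

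Next I would derive the error equations by inserting $R_hu^n,R_hv^n$ into (\ref{CNh1})--(\ref{CNh2}) and subtracting the exact relations $u_t=v$ and $(v_t,\varphi)+(\nabla u,\nabla\varphi)=(f,\varphi)$. Using the defining property of $R_h$ to replace $(\nabla\tfrac{u^{n+1}+u^n}{2},\nabla\varphi_h)$ by the same expression in $R_hu$, the error equations become $\partial_{n+1/2}\theta-\tfrac{\eta^n+\eta^{n+1}}{2}=-R_h\omega^n$ and $(\partial_{n+1/2}\eta,\varphi_h)+(\nabla\tfrac{\theta^{n+1}+\theta^n}{2},\nabla\varphi_h)=-(\mu^n,\varphi_h)$, where $\omega^n$ and $\mu^n$ collect the consistency errors. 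The key observation is that $\omega^n=\tfrac{1}{\tau_n}\int_{t_n}^{t_{n+1}}v\,dt-\tfrac{v^{n+1}+v^n}{2}$ and the time part of $\mu^n$ are exactly trapezoidal-quadrature errors; a Peano-kernel (integral remainder) representation then gives $\tau_n|\omega^n|_{H^1(\Omega)}\le C\tau_n^2\int_{t_n}^{t_{n+1}}|\partial_t^3u|_{H^1(\Omega)}\,dt$ and, for the time part of $\mu^n$, the analogous $L^2$ bound with $\partial_t^4u$, producing the two $\tau^2$ groups in (\ref{apriori}). The remaining, purely spatial, part of $\mu^n$ equals $\tfrac{1}{\tau_n}\int_{t_n}^{t_{n+1}}(I-R_h)\partial_t^2u\,dt$ and is bounded in $L^2$ by $Ch|\partial_t^2u|_{H^1(\Omega)}$, giving the $h\int|\partial_t^2u|_{H^1(\Omega)}$ contribution.

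The heart of the argument is a discrete energy identity mimicking the conservation property of the scheme. I would test the second error equation with $\varphi_h=\tau_n\tfrac{\eta^{n+1}+\eta^n}{2}$ and use the first error equation to rewrite $\tau_n\tfrac{\eta^{n+1}+\eta^n}{2}=\theta^{n+1}-\theta^n+\tau_nR_h\omega^n$ inside the $H^1$ term. With $E^n:=\|\eta^n\|_{L^2(\Omega)}^2+|\theta^n|_{H^1(\Omega)}^2$ this yields the telescoping relation $E^{n+1}-E^n=-2\tau_n(\mu^n,\tfrac{\eta^{n+1}+\eta^n}{2})-2\tau_n(\nabla\tfrac{\theta^{n+1}+\theta^n}{2},\nabla R_h\omega^n)$. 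Summing over $n$, applying Cauchy--Schwarz and the stability bound $|R_h\omega^n|_{H^1(\Omega)}\le|\omega^n|_{H^1(\Omega)}$, and setting $\mathcal{E}=\max_n(E^n)^{1/2}$ and $S=\sum_n\tau_n(\|\mu^n\|_{L^2(\Omega)}+|\omega^n|_{H^1(\Omega)})$, I obtain $E^N\le E^0+2\mathcal{E}S$. Since this holds for every $N$, taking the maximum of the left-hand side gives the quadratic inequality $\mathcal{E}^2\le E^0+2\mathcal{E}S$, whence $\mathcal{E}\le\sqrt{E^0}+2S$. \emph{This is where the conservative choice $\beta=1/4,\gamma=1/2$ is essential: no discrete Gronwall lemma, and hence no exponential-in-$T$ factor, is needed.}

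Finally I would assemble the bound. The initial energy satisfies $E^0\le Ch^2(|v_0|_{H^1(\Omega)}^2+|u_0|_{H^2(\Omega)}^2)$ by the hypotheses on $u^0_h,v^0_h$ together with the projection estimates, giving the first term of (\ref{apriori}), while $S$ produces the two $\tau^2$ groups and the $h\int|\partial_t^2u|_{H^1(\Omega)}$ term as above. Writing the total error by the triangle inequality as $\max_n(\|v^n_h-v^n\|_{L^2(\Omega)}^2+|u^n_h-u^n|_{H^1(\Omega)}^2)^{1/2}\le\mathcal{E}+\max_n(\|\zeta^n\|_{L^2(\Omega)}^2+|\rho^n|_{H^1(\Omega)}^2)^{1/2}$, the remaining projection contributions $\max_n|\rho^n|_{H^1(\Omega)}\le Ch\max_n|u(t_n)|_{H^2(\Omega)}$ and $\max_n\|\zeta^n\|_{L^2(\Omega)}\le Ch\max_n|\partial_tu(t_n)|_{H^1(\Omega)}$ are controlled by their values at $t_N$ plus accumulated variation, e.g.\ $|u(t_n)|_{H^2(\Omega)}\le|u(t_N)|_{H^2(\Omega)}+\sum_{k=n}^{N-1}|u(t_{k+1})-u(t_k)|_{H^2(\Omega)}$, which after a midpoint/trapezoidal quadrature of $\partial_tu$ yields precisely the terms $\sum_n\tau_n'|\partial_tu(t_n)|_{H^2(\Omega)}$, $|\partial_tu(t_N)|_{H^1(\Omega)}$ and $|u(t_N)|_{H^2(\Omega)}$. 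The main obstacle I anticipate is twofold: carrying out the consistency analysis sharply enough, via integral remainders rather than pointwise Taylor expansion, so that each quadrature error is attributed to the correct time derivative in the correct spatial norm and the optimal second order in $\tau$ is recovered; and organizing the test-function choice so that the two equations combine into an \emph{exactly} telescoping energy, since any mismatch would force a Gronwall argument and destroy the clean, exponential-free constant.
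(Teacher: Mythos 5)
Your overall strategy coincides with the paper's: rewrite \eqref{wave} as the first-order system, split the error via an elliptic projection, exploit the conservative structure of the Crank--Nicolson/Newmark scheme to get an exactly telescoping energy identity (no Gronwall, no exponential in $T$), bound the consistency residuals by Peano-kernel integral remainders to obtain the two $\tau_n^2$ groups, and finish by triangle inequality. The one consequential deviation is that you use the Ritz projection $R_h$ for \emph{both} components, whereas the paper sets $e^n_u=u^n_h-\Pi_h u^n$ but $e^n_v=v^n_h-\tilde I_h v^n$ with a Cl\'ement-type interpolant. This matters: your argument needs $\|w-R_hw\|_{L^2(\Omega)}\le Ch\,|w|_{H^1(\Omega)}$, which you cite as standard, but it is an Aubin--Nitsche duality estimate and requires $H^2$ elliptic regularity of $\Omega$ (e.g.\ convexity). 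You invoke it in $E^0$ (for $\zeta^0$) and, critically, for the spatial part of $\mu^n$, namely $\tau_n^{-1}\bigl\|(I-R_h)\int_{t_n}^{t_{n+1}}\partial_t^2u\,dt\bigr\|_{L^2(\Omega)}$. The theorem asserts a constant depending only on the regularity of $\Th$ on a general bounded domain, and near a reentrant corner the $O(h)$ duality rate fails; the Cl\'ement bound \eqref{Clement} delivers the same $O(h)$ rate from local $H^1$ data with a purely mesh-dependent constant, which is exactly why the paper mixes the two operators. As written, your proof therefore establishes the theorem only on domains with $H^2$ regularity; substituting $\tilde I_h$ for $R_h$ on the velocity component (keeping the identity $(I-\tilde I_h)\varphi_h=0$ on $V_h$ so the telescoping survives) removes the restriction and lands you on the paper's proof.

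A second, smaller flaw is in the final assembly: bounding $\max_n|\rho^n|_{H^1(\Omega)}$ by accumulated variation and then asserting that a trapezoidal quadrature ``yields precisely'' $\sum_n\tau_n'|\partial_t u(t_n)|_{H^2(\Omega)}$ is backwards --- an integral is not bounded by its quadrature sum without an extra remainder in $|\partial_t^2u|_{H^2(\Omega)}$, a quantity absent from \eqref{apriori}. The repair is easy and makes the step unnecessary: since your telescoped bound $E^N\le E^0+2\mathcal{E}S$ holds for every $N$, you may, as the paper does, redeclare the time at which the maximum of the total error is attained as $t_N$; the projection remainders are then needed only at that one time and contribute $Ch\bigl(|\partial_t u(t_N)|_{H^1(\Omega)}+|u(t_N)|_{H^2(\Omega)}\bigr)$ directly (for $\zeta^N$ in $L^2$ this again calls for Cl\'ement rather than Ritz on general domains). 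Note finally that the nodal sum $\sum_n\tau_n'|\partial_t u(t_n)|_{H^2(\Omega)}$ arises in the paper from the $H^1$ interpolation error of $v^{n+1/2}$ inside the residual $R^n_1$; in your Ritz-for-$v$ variant that spatial contribution cancels and the term simply never appears, so omitting it only sharpens the bound and \eqref{apriori} still follows a fortiori --- what it buys the paper is freedom from duality, not a sharper estimate.
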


\begin{proof} Let us introduce $e^n_u = u^n_h - \Pi_h u^n$ and $e^n_v = v^n_h - I_h v^n$ where
$\Pi_h : H^1_0 (\Omega) \to V_h$ is the $H^1_0$-orthogonal projection
operator, i.e.
\begin{equation}\label{Pih}
 \left(\nabla \Pi_h v, \nabla\varphi_h\right) = \left(\nabla v,
   \nabla\varphi_h\right), \hspace{1em} \forall v \in H^1_0 (\Omega),\hspace{1em}\forall \varphi_h \in V_h 
\end{equation}
and $\tilde I_h : H^1_0 (\Omega) \to V_h$ is a Cl{\'e}ment-type interpolation operator which is also a projection, i.e. $\tilde I_h=Id$ on $V_h$, cf. \cite{ErnGue, ScoZh}. 

Let us recall, for future reference, the well known properties of these operators (see  \cite{ErnGue}): for every sufficiently smooth function $v$ the following inequalities hold
\begin{equation}\label{Pinterp}
  | \Pi_h v |_{H^1(\Omega)}\leq | v |_{H^1(\Omega)}, 
  \hspace{1em} | v - \Pi_h v |_{H^1(\Omega)} \leq Ch | v |_{H^2(\Omega)}
\end{equation}
with a constant $C > 0$ which depends only on the regularity of the mesh.
Moreover, for all $K\in\Th$ and  $E \in \mathcal{E}_h$ we have 
\begin{equation}\label{Clement}
 \| v - \tilde I_h v \|_{L^2(K)} \leq Ch_K |v |_{H^1(\omega_K)}
\text{ and }
\| v - \tilde I_h v \|_{L^2(E)} \leq Ch_{E}^{1 / 2} | v |_{H^1(\omega_{E})}
\end{equation}
Here $\omega_K$  (resp. $\omega_E$) represents the set of triangles of $\mathcal{T}_h$ having a common vertex with triangle $K$ (resp. edge $E$) and the constant $C > 0$ depends only on the regularity of the mesh.

Observe that for $\varphi_h,\psi_h\in V_h$ the following equations hold 
  \begin{align}
   \notag\left(\nabla\partial_{n + 1 / 2} e_u,\nabla\varphi_h\right)  - \left(\nabla e_v^{n + 1 / 2},\nabla\varphi_h\right) &\\
   = -&\left(\nabla  \left( \partial_{n + 1 / 2}
     u - \tilde I_h v^{n + 1 / 2}\right),\nabla\varphi_h\right), 
     \label{ErrEqApr1}\\
    \left( \partial_{n + 1 / 2} e_v, \psi_h\right)  + \left( \nabla e_u^{n + 1 / 2}, \nabla
     \psi_h\right)  = & \left( \left( \cfrac{\partial^2 u}{\partial t^2}\right)^{n + 1 / 2} - \tilde I_h \left( \partial_{n + 1 / 2}
     v\right), \psi_h\right).
     \label{ErrEqApr2}
  \end{align}
The last equation is a direct consequence of  (\ref{CNh2}) together with the governing equation (\ref{wave}) evaluated at times $t_n$ and $t_{n+1}$. In accordance with the conventions above, we have denoted here
$$
\left(\cfrac{\partial^2 u}{\partial t^2}\right)^{n + 1 / 2}:=\frac{1}{2}\left(\cfrac{\partial^2 u}{\partial t^2}(t_n)+\cfrac{\partial^2 u}{\partial t^2}(t_{n+1})\right)
$$
Equation (\ref{ErrEqApr1}) is obtained from (\ref{CNh1}) taking the gradient of both sides, multiplying by $\nabla\varphi_h$ and integrating over $\Omega$. 

Putting $\varphi_h = e_u^{n + 1 / 2}$ and $\psi_h = e_v^{n + 1 / 2}$ and taking the sum of (\ref{ErrEqApr1})--(\ref{ErrEqApr2}) yields
   \begin{align}
   \label{ErrAprInterMed}
   \frac{| e^{n + 1}_u |_{H^1(\Omega)}^2 - | e^n_u |_{H^1(\Omega)}^2 + \| e^{n + 1}_v \|_{L^2(\Omega)}^2 - \|
     e^n_v \|_{L^2(\Omega)}^2}{2 \tau_n} = &- \left( \nabla R^n_1, \nabla e_u^{n + 1 / 2}\right) \\
     \notag &+ \left(R^n_2, e_v^{n + 1 / 2}\right)  
   \end{align}
  with
  \begin{align*}
  R^n_1 = \partial_{n + 1 / 2} u - \tilde I_hv^{n + 1 / 2} \text{ and }
  R^n_2 = \left( \cfrac{\partial^2 u}{\partial t^2}\right)^{n + 1 / 2} - \tilde I_h \left( \partial_{n + 1 / 2}v\right).
  \end{align*}
Set
\[ E^n = \left(\left|e^n_u \right|_{H^1 (\Omega)}^2 +\left\|e^n_v \right\|_{L^2
   (\Omega)}^2\right)^{1/2} \]
so that equality (\ref{ErrAprInterMed}) with Cauchy-Schwarz inequality entails
\[ \frac{(E^{n + 1})^2 - (E^n)^2}{2 \tau_n} \leq \left(|R^n_1 |_{H^1 (\Omega)}^2
   +\|R^n_2 \|_{L^2 (\Omega)}^2\right)^{1/2} \frac{E^{n + 1} + E^n}{2} 
\]  
which implies
\[ E^{n + 1} - E^n \leq
   \tau_n\left(|R^n_1 |_{H^1 (\Omega)} +\|R^n_2 \|_{L^2 (\Omega)}\right). 
\]  
Summing this over $n$ from 0 to $N - 1$ gives
\begin{align}
\label{sumeN}  (|e^N_u |_{H^1 (\Omega)}^2 +\|e^N_v \|_{L^2
  (\Omega)}^2)^{{1}/{2}} &\leq (|e^0_u |_{H^1 (\Omega)}^2 + \|e^0_v
  \|_{L^2 (\Omega)}^2)^{{1}/{2}}\\
\notag   &+ \sum_{n = 0}^{N - 1} \tau_n (|R^n_1 |_{H^1 (\Omega)} +\|R^n_2 \|_{L^2
  (\Omega)}) 
\end{align} 
We have the following estimates for $R^n_1$ and $R^n_2$
\begin{align}
\label{Rn1} |R^n_1 |_{H^1(\Omega)} &\leq C \tau_n  \int_{t_n}^{t_{n + 1}} \left\vert \cfrac{\partial^3
   u}{\partial t^3} \right\vert_{H^1(\Omega)} dt\\
\notag   &+ Ch\left(
   \left\vert \cfrac{\partial u}{\partial t} \left(t^n\right) \right\vert_{H^2(\Omega)}
   + \left\vert \cfrac{\partial u}{\partial t} \left(t^{n+1}\right) \right\vert_{H^2(\Omega)}
   \right)\\
\label{Rn2}\|R^n_2 \|_{L^2(\Omega)} &\leq C \tau_n  \int_{t_n}^{t_{n + 1}}\left\Vert \cfrac{\partial^4
   u}{\partial t^4} \right\Vert_{L^2(\Omega)} dt + C \frac{h}{\tau_n}  \int_{t_n}^{t_{n + 1}} \left\vert \cfrac{\partial^2
   u}{\partial t^2} \right\vert_{H^1(\Omega)} dt
\end{align}
The proof of (\ref{Rn1})--(\ref{Rn2}) is quite standard, but tedious. For brevity, we provide here only the proof of estimate (\ref{Rn2}): we rewrite the definition of $R^n_2$ recalling that $v = \partial u/\partial t$ and using the Taylor expansion around $t=t_{n+1/2 }$ as follows
\begin{align*}
      R^n_2 &= \frac 12\left(\cfrac{\partial^2 u}{\partial t^2} ( t_{n+1}) + \cfrac{\partial^2 u}{\partial t^2} ( t_{n})\right) - \frac {1}{\tau_n}\left(\cfrac{\partial u}{\partial t} ( t_{n+1}) - \cfrac{\partial u}{\partial t} ( t_{n})\right)\\
     & + \frac {1}{\tau_n}\left( I - \tilde I_h\right)  \left(\cfrac{\partial u}{\partial t} ( t_{n+1}) - \cfrac{\partial u}{\partial t} ( t_{n})\right)
      =  \int_{t_{n + 1 / 2}}^{t_{n + 1}} \left(
  \frac{t_{n + 1} - t}{2} - \frac{( t_{n + 1} - t)^2}{2 \tau_n} \right)\cfrac{\partial^4
   u}{\partial t^4}
  dt \\
  & - \int_{t_n}^{t_{n + 1 / 2}} \left( \frac{t_n - t}{2}
  + \frac{( t_n - t)^2}{2 \tau_n} \right) \cfrac{\partial^4
   u}{\partial t^4}dt  + \frac{1}{\tau_n}
  ( I - \tilde I_h) \int_{t_n}^{t_{n + 1}} \cfrac{\partial^2
   u}{\partial t^2} dt.
\end{align*}
Taking the $L^2(\Omega)$ norm on both sides and applying the projection error estimate (\ref{Pinterp}) in $L^2(\Omega)$ we obtain (\ref{Rn2}).

Substituting (\ref{Rn1})--(\ref{Rn2}) into (\ref{sumeN}) yields
\begin{align*}
\left(\left|e^N_u \right|_{H^1(\Omega)}^2  +\left\|e^N_v\right\|_{L^2(\Omega)}^2\right)&^{1/2} \leq
 \left(\left|e^0_u\right|_{H^1(\Omega)}^2 +\left\|e^0_v \right\|_{L^2(\Omega)}^2\right)^{1/2} \\
 &+ C \sum_{n = 0}^{N - 1} \tau^2_n  \left(
   \int_{t_n}^{t_{n + 1}} \left\vert \cfrac{\partial^3
   u}{\partial t^3} \right\vert_{H^1(\Omega)} dt  + \int_{t_n}^{t_{n + 1}} \left\Vert \cfrac{\partial^4
   u}{\partial t^4} \right\Vert_{L^2(\Omega)} dt \right) \\
   & + Ch \int_0^{t_N}  \left\vert \cfrac{\partial^2 u}{\partial t^2} \right\vert_{H^1(\Omega)} dt
    + Ch \sum_{n=0}^{N}  \tau_n'\left\vert \cfrac{\partial u}{\partial t} (t_n)\right\vert_{H^2(\Omega)}. 
\end{align*}
Applying the triangle inequality and estimate (\ref{Pinterp}) in the above inequality we get
\begin{align}
\left(\left\Vert v^N_h - \cfrac{\partial u}{\partial t} (t_N) \right\Vert_{L^2(\Omega)}^2 + \left|u^N_h - u (t_N) \right|^2_{H^1(\Omega)}\right)^{1/2} \leq \left(\left|e^N_u\right|_{H^1(\Omega)}^2 +\left\|e^N_v \right\|_{L^2(\Omega)}^2\right)^{1/2}& \notag\\+
   \left(\left\Vert \left(I-\tilde I_h\right) \frac{\partial u}{\partial t} (t_N)\right\Vert_{L^2(\Omega)}^2 + \left\vert \left(I-\Pi_h\right) u (t_N) \right\vert^2_{H^1(\Omega)}\right)^{1/2}&
\end{align}
which implies (\ref{apriori}) since we can safely assume that the maximum of the error in (\ref{apriori}) is attained at the final time $t_N$ (if not, it suffices to redeclare the time where the maximum is attained as $t_N$).
\end{proof}

\begin{remark} Estimate (\ref{apriori}) is of order $h$ in space which is due to the the presence of $H^1$ term in the norm in which we
measure the error. One sees easily that essentially the proof above gives the estimate of order $h^2$, multiplied by the norms of the exact solution in more regular spaces, if the target norm is changed to \newline $\displaystyle \max_{0 \leq n \leq N}\left\Vert v^n_h - \cfrac{\partial u}{\partial t} (t_n) \right\Vert_{L^2(\Omega)}$. One would rely then on the estimate
$$
 \left\| v - \Pi_h v \right\|_{L^2(\Omega)} \leq Ch^2 | v |_{H^2(\Omega)}
$$
for the orthogonal projection error and one would obtain
  \begin{align}\label{apriori2}
     \left\Vert v^N_h - \cfrac{\partial u}{\partial t} (t_N) \right\Vert_{L^2(\Omega)} & \leq \left\| v^0_h - v_0 \right\|_{L^2(\Omega)}^2
    +Ch^2\left| v_0 \right|_{H^2(\Omega)}\\
    \notag & + \sum_{n = 0}^{N - 1} \tau^2_n \left(
    \int_{t_n}^{t_{n + 1}} \left\vert \frac{\partial^3 u}{\partial t^3} \right\vert_{H^1(\Omega)} {dt} +
    \int_{t_n}^{t_{n + 1}} \left\Vert  \frac{\partial^4 u}{\partial t^4} \right\Vert_{L^2(\Omega)} {dt} \right)
    \\
    \notag & + C h^2  \left( \int_{t_0}^{t_N} \left\vert  \frac{\partial^2 u}{\partial t^2} \right\vert_{H^2(\Omega)} {dt}
    + \left\vert  \frac{\partial u}{\partial t}(t_N) \right\vert_{H^2(\Omega)}
    \right)
  \end{align}

\end{remark}

\section{A posteriori error estimates for the wave equation in the ``energy'' norm}\label{section3}
Our aim here is to derive a posteriori bounds in time and space for the error measured in the norm (\ref{EnNorm}). We discuss some considerations about upper bound for $3$-point time estimator.

\subsection{A 3-point estimator: an upper bound for the error}

The basic technical tool in deriving time error estimator is the piecewise quadratic (in time) reconstruction of the discrete solution, already used in \cite{LPP} in a similar context.  

\begin{definition}\label{QuadRec}
Let $u^n_h$ be the discrete solution given by the scheme (\ref{Newm2}).  Then, the piecewise quadratic reconstruction $\tilde{u}_{h\tau} (t) : [0, T] \rightarrow V_h$ is constructed as the continuous in time function that is equal on $[t_n, t_{n + 1}]$, $n\ge 1$, to the quadratic polynomial in $t$ that coincides with $u^{n + 1}_h$ (respectively $u^n_h$, $u^{n - 1}_h$) at time $t_{n+1}$ (respectively $t_n$, $t_{n - 1}$).  Moreover, $\tilde{u}_{h\tau} (t)$ is defined on $[t_0, t_{1}]$ as the quadratic polynomial in $t$ that coincides with $u^{2}_h$ (respectively $u^1_h$, $u^{0}_h$) at time $t_{2}$ (respectively $t_1$, $t_{0}$). Similarly, we introduce  piecewise quadratic reconstruction $\tilde{v}_{h\tau} (t) : [0, T] \rightarrow V_h$ based on $v^n_h$ defined by (\ref{vhform}) and $\tilde{f}_{\tau} (t) : [0, T] \rightarrow L^2(\Omega)$ based on $f(t_n,\cdot)$.  
\end{definition}

Our quadratic reconstructions $\tilde{u}_{h\tau}$, $\tilde{v}_{h\tau}$ are thus based on three points in time (normally looking backwards in time, with the exemption of the initial time slab $[t_0,t_1]$). This is why the error estimator derived in the following theorem using Definition \ref{QuadRec} will be referred to as the $3$-point estimator.

\begin{theorem}\label{lemest3}
The following a posteriori error estimate holds between the solution $u$ of the wave equation (\ref{wave}) and the discrete solution $u_h^n$ given by (\ref{Newm1})--(\ref{Newm2}) for all $ t_n,~0\leq n\leq N$ with $v_h^n$ given by (\ref{vhform}):
\begin{multline}
\left(\left\Vert v^{n}_h- \cfrac{\partial u}{\partial t} (t_n)\right\Vert_{L^2(\Omega)} ^{2}+\left\vert u^{n}_h-u(t_{n})\right\vert ^{2}_{H^1(\Omega)}\right) ^{1/2}\\
\leq\left(\left\Vert v^{0}_h-v_0\right\Vert_{L^2(\Omega)} ^{2}+\left\vert u^{0}_h-u_0\right\vert ^{2}_{H^1(\Omega)}\right) ^{1/2} \\
+\eta _{S}(t_{N})+\sum_{k=0}^{N-1}\tau_k\eta _{T}(t_{k})
+\int_0^{t_n} \|f-\tilde{f}_\tau\|_{L^2(\Omega)}dt
\label{estf}
\end{multline}
where the space indicator is defined by
\begin{align}
  \eta_S (t_k)
  &= C_1 \max_{0 \leqslant t \leqslant t_k} \Biggl[ \sum_{K \in \mathcal{T}_h}
   h_K^2  \left\Vert \frac{\partial \tilde{v}_{h \tau}}{\partial t} - \Delta \tilde{u}_{h \tau}-f
   \right\Vert_{L^2(K)}^2 + \sum_{
E \in \mathcal{E}_h}h_{E} \left|\left[n \cdot \nabla \tilde{u}_{h
   \tau}\right]\right|_{L^2(E)}^2 \Biggl]^{1/2}
\notag 
\\
\notag &+ C_2\sum_{m = 0}^{k-1} \int_{t_m}^{t_{m + 1}} \Biggl[ \sum_{K \in \mathcal{T}_h} h_K^2
  \left\Vert \frac{\partial^2 \tilde{v}_{h \tau}}{\partial t^2} - \Delta \frac{\partial
  \tilde{u}_{h\tau}}{\partial t} -\frac{\partial{f}}{\partial{t}}\right\Vert_{L^2(K)}^2
  + \sum_{
E \in \mathcal{E}_h}h_{E} \left\Vert\left[n \cdot
  \nabla \frac{\partial \tilde{u}_{h\tau}}{\partial t}\right]\right\Vert_{L^2(E)}^2
  \Biggr]^{1/2}dt
\notag 
\\
&  + C_3\sum_{m = 1}^{k-1} {\tau_{m - 1}} \left[ \sum_{K \in \mathcal{T}_h} h_K^2 \left\Vert \partial_m^2 v_h -
  \partial_{m - 1}^2 v_h \right\Vert_{L^2(K)}^2 \right]^{1/2}\label{space}
\end{align}
here  $C_1,~C_2,~C_3$ are constants depending only on the mesh regularity,
$[\cdot]$ stands for a jump on an edge $E\in\mathcal{E}_h$, and 
$\tilde{u}_{h\tau}$, $\tilde{v}_{h\tau}$ are given by Definition \ref{QuadRec}. 

The error indicator in time for $k=1,\dots,N-1$ is
\begin{equation}\label{in}
\eta_T (t_k)=\left(\frac{1}{12}\tau_{k}^2+\frac{1}{8}\tau_{k-1}\tau_{k}\right)\left(\left\vert\partial _{k}^{2}{v_h}\right\vert_{H^1(\Omega)}
+ \left\Vert \partial _{k}^{2}{f_h} - z_h^k\right\Vert_{L^2(\Omega)}^2\right)^{1/2}
\end{equation}
where $z^k_h$ is such that
\begin{equation}\label{zh}
\left(z_h^k, \varphi_h\right) =(\nabla \partial _{k}^{2}{u_h}, \nabla\varphi_h),
\quad\forall\varphi_h\in V_h
\end{equation}
and
\begin{equation}\label{in0step}
\eta_T (t_0)=\left(\frac{5}{12}\tau_{0}^2+\frac{1}{2}\tau_{1}\tau_{0}\right)\left(\left\vert\partial _{1}^{2}{v_h}\right\vert_{H^1(\Omega)}
+ \left\Vert \partial _{1}^{2}{f_h} - z^1_h\right\Vert_{L^2(\Omega)}^2\right)^{1/2}
\end{equation}
\end{theorem}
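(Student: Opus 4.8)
The plan is to mirror the energy argument of Theorem~\ref{lemma}, but now at the continuous level, using the piecewise quadratic reconstructions of Definition~\ref{QuadRec} in place of the discrete iterates. First I would set $e_u = u - \tilde{u}_{h\tau}$ and $e_v = v - \tilde{v}_{h\tau}$, where $v=\partial u/\partial t$, and derive the perturbed first-order system these errors satisfy. Subtracting the reconstructed analogues of (\ref{syst}) from the exact equations gives, in the weak (distributional-in-space) sense,
\begin{align*}
\frac{\partial e_u}{\partial t} - e_v &= -R_u, \qquad R_u := \frac{\partial \tilde{u}_{h\tau}}{\partial t} - \tilde{v}_{h\tau}, \\
\frac{\partial e_v}{\partial t} - \Delta e_u &= -R_v, \qquad \langle R_v,\varphi\rangle := \Bigl(\frac{\partial \tilde{v}_{h\tau}}{\partial t},\varphi\Bigr) + (\nabla \tilde{u}_{h\tau},\nabla\varphi) - (f,\varphi),
\end{align*}
where $R_u(t)\in V_h$ for each $t$ because both $\partial\tilde{u}_{h\tau}/\partial t$ and $\tilde{v}_{h\tau}$ are $V_h$-valued, whereas $R_v$ must be kept in $H^{-1}(\Omega)$ since it carries the spatial residual of $\tilde{u}_{h\tau}$.

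Next I would test the first equation in the $H^1$-seminorm (against $-\Delta e_u$) and the second against $e_v$, add them, and observe that the cross terms $\pm(\nabla e_u,\nabla e_v)$ cancel exactly as in (\ref{ErrAprInterMed}), yielding
\[
\frac{1}{2}\frac{d}{dt}\Bigl(|e_u|_{H^1(\Omega)}^2 + \|e_v\|_{L^2(\Omega)}^2\Bigr) = -(\nabla R_u,\nabla e_u) - \langle R_v, e_v\rangle.
\]
Writing $E(t) = (|e_u|_{H^1(\Omega)}^2 + \|e_v\|_{L^2(\Omega)}^2)^{1/2}$, integrating over $[0,t_n]$, and noting that $\tilde{u}_{h\tau}(0)=u_h^0$, $\tilde{v}_{h\tau}(0)=v_h^0$ (so that $E(0)$ produces the initial-data term of (\ref{estf})), reduces the theorem to estimating $\int_0^{t_n}(\nabla R_u,\nabla e_u)\,dt$ and $\int_0^{t_n}\langle R_v,e_v\rangle\,dt$ and then absorbing the resulting factors of $\max_t E$ by a standard quadratic argument.

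I would then split these residuals into a spatial and a temporal contribution. For the spatial part I would use $e_v = \partial e_u/\partial t + R_u$ and integrate by parts in time: the boundary term at $t_n$ pairs the elliptic residual of $\tilde{u}_{h\tau}$ with $e_u(t_n)$, the interior term pairs its time derivative with $e_u$, and the temporal jumps of the piecewise quadratic reconstruction at the interior nodes $t_m$ contribute node terms. In each pairing I would subtract a Cl\'ement interpolant $\tilde{I}_h e_u$, integrate by parts element-wise (recalling $\Delta\tilde{u}_{h\tau}=0$ on each $K$ for $\mathbb{P}_1$ elements) to expose the interior residual $\partial\tilde{v}_{h\tau}/\partial t - \Delta\tilde{u}_{h\tau} - f$ and the edge jumps $[n\cdot\nabla\tilde{u}_{h\tau}]$, and apply (\ref{Clement}) to supply the weights $h_K$, $h_E^{1/2}$. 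The three resulting structures are precisely the three lines of (\ref{space}): the boundary term, bounded by its maximum over $[0,t_k]$, gives the first line; the time-derivative term gives the second line; and the jump $[\partial\tilde{v}_{h\tau}/\partial t]_{t_m}$, which equals $\tfrac12\tau_{m-1}(\partial_m^2 v_h - \partial_{m-1}^2 v_h)$ up to mesh ratios and is a genuine $L^2$ function (hence carries no edge contribution), gives the third line. Replacing $f$ by $\tilde{f}_\tau$ costs exactly $\int_0^{t_n}\|f-\tilde{f}_\tau\|_{L^2(\Omega)}\,dt$.

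Finally, the temporal contribution — $(\nabla R_u,\nabla e_u)$ together with the $V_h$-component of $R_v$ — I would bound using the defining property of the quadratic reconstruction, namely that on each slab $[t_n,t_{n+1}]$ one has $\partial^2\tilde{u}_{h\tau}/\partial t^2 = \partial_n^2 u_h$ and $\partial^2\tilde{v}_{h\tau}/\partial t^2 = \partial_n^2 v_h$, constant in $t$, together with the identity (\ref{zh}) defining $z_h^k$. Expressing $R_u$ and the $V_h$-part of $R_v$ through these second differences yields the factors $|\partial_k^2 v_h|_{H^1(\Omega)}$ and $\|\partial_k^2 f_h - z_h^k\|_{L^2(\Omega)}$ of (\ref{in}), while integrating the quadratic interpolation-remainder kernels over each slab produces the constants $\tfrac{1}{12}\tau_k^2 + \tfrac{1}{8}\tau_{k-1}\tau_k$. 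I expect the main obstacle to lie precisely here: carrying out this temporal computation exactly, tracking the discontinuities of the piecewise quadratic reconstruction at the nodes through the time integration by parts, and in particular treating the first slab $[t_0,t_1]$ separately, where the reconstruction is forward-looking and the remainder integral changes, giving the anomalous constant $\tfrac{5}{12}\tau_0^2 + \tfrac{1}{2}\tau_1\tau_0$ of (\ref{in0step}).
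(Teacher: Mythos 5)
Your proposal is correct and follows essentially the same route as the paper's proof: the identical energy identity for the reconstruction error of the first-order system (you write it componentwise where the paper uses the vector notation $b(\cdot,\cdot)$ and the skew matrix $\mathcal{A}$, and your substitution $e_v=\partial e_u/\partial t+R_u$ with $R_u\in V_h$ is exactly the paper's observation $(I-\tilde I_h)E_v=\partial_t(I-\tilde I_h)E_u$), the same Cl\'ement-interpolation and elementwise integration-by-parts treatment of the spatial residual with time integration by parts and the jumps $\left[\partial\tilde v_{h\tau}/\partial t\right]_{t_m}$ yielding the three lines of (\ref{space}), and the same temporal kernel computation (in the paper $R_u=-p_n\,\partial_n^2 v_h$ with $\int_{t_m}^{t_{m+1}}|p_m|\,dt\le\frac{1}{12}\tau_m^3+\frac18\tau_{m-1}\tau_m^2$, obtained by subtracting the scheme at steps $n$ and $n-1$), including the separate forward-looking first slab giving (\ref{in0step}). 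No genuine gap: the one step you flag as the main obstacle is precisely the paper's $p_n$-polynomial calculation, which proceeds exactly as you sketch.
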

\begin{proof}
In the following, we adopt the vector notation $U (t, x)
=\begin{pmatrix}
  u (t, x)\\
  v (t, x)
\end{pmatrix}$ where $v = {\partial u}/{\partial t}$. Note that the first equation in
(\ref{syst}) implies that
\[ \left(\nabla \cfrac{\partial u}{\partial t}, \nabla \varphi\right) - (\nabla v, \nabla \varphi) = 0,
   \quad \forall \varphi \in H^1_0 (\Omega) \]
by taking its gradient, multiplying it by $\nabla \varphi$ and integrating
over $\Omega$. Thus, system (\ref{syst}) can be rewritten in the vector
notations as
\begin{equation}
  {b} \left(\cfrac{\partial U}{\partial t}, \Phi\right) + \left(\mathcal{A} \nabla U, \nabla \Phi\right) = 
  {b} (F,\Phi), \quad  \forall \Phi \in (H^1_0 (\Omega))^2 \label{ODEf}
\end{equation}
where $\mathcal{A} = \begin{pmatrix}
  0 &- 1\\
  1&~0
\end{pmatrix}$, $F =\begin{pmatrix}
  0\\
  f
\end{pmatrix}$ and
$$ {b} ( U, \Phi)
 = {b}\left(\begin{pmatrix}u\\ v\end{pmatrix}, \begin{pmatrix} \varphi\\ \psi\end{pmatrix} \right)
:= (\nabla u, \nabla \varphi) + (v, \psi)
$$
Similarly, Newmark scheme (\ref{CNh1})--(\ref{CNh2}) can be rewritten as
\begin{equation}
  {b} \left( \frac{U_h^{n + 1} - U_h^n}{\tau_n}, \Phi_h \right) +
  \left( \mathcal{A} \nabla \frac{U_h^{n + 1} + U_h^n}{2}, \nabla \Phi_h \right) = 
  {b} \left(F^{n+1/2}, \Phi_h\right), \hspace{1em} \forall \Phi_h \in V_h^2
  \label{vectorScheme}
\end{equation}
where $U_h^n = \begin{pmatrix}
  u_h^n\\
  v_h^n
\end{pmatrix}$  and $F^{n+1/2} = \begin{pmatrix}
  0\\
  f^{n + 1/2}
\end{pmatrix}$.

The a posteriori analysis relies on an appropriate residual equation for the quad\-ra\-tic reconstruction $\tilde{U}_{h\tau}=\begin{pmatrix} \tilde{u}_{h\tau}\\ \tilde{v}_{h\tau}\end{pmatrix}$. We have thus for $t \in [t_n,t_{n + 1}]$, $n = 1, \ldots, N-1$
\begin{equation}
  \tilde{U}_{h\tau} (t) = U^{n + 1}_h + (t - t_{n + 1}) \partial_{n +
  1/2} U_h + \frac{1}{2}  (t - t_{n + 1})  (t - t_n) \partial_n^2 U_h
\end{equation}
so that, after some simplifications,
\begin{multline}\label{Uode1}
{b} \left( \frac{\partial \tilde{U}_{h\tau}}{\partial t}, \Phi_h
   \right) + (\mathcal{A} \nabla \tilde{U}_{h\tau}, \nabla \Phi_h)
   ={b} \left( (t - t_{n + 1/2}) \partial_n^2 U_h + F^{n+1/2},
   \Phi_h \right) \\
   + \left( (t - t_{n + 1/2}) \mathcal{A} \nabla
   \partial_{n + 1/2} U_h + \frac{1}{2} (t - t_{n + 1}) (t - t_n)
   \mathcal{A} \nabla \partial_n^2 U_h, \nabla \Phi_h \right)
\end{multline}
Consider now (\ref{vectorScheme}) at time steps $n$ and $n - 1$. Subtracting one from another and dividing by $\tau_{n - 1/2}$ yields
$$
{b} \left(\partial_n^2 U_h, \Phi_h\right) + \left(\mathcal{A} \nabla \partial_n
   U_h, \nabla \Phi_h\right) = {b}\left( \partial_n F, \Phi_h \right) 
$$
or
$$
{b} \left(\partial_n^2 U_h, \Phi_h\right) + \left(\mathcal{A} \nabla \left(
   \partial_{n + 1/2} U_h - \frac{\tau_{n - 1}}{2} \partial_n^2 U_h
   \right), \nabla \Phi_h \right) = 
   {b}\left(\partial_n F, \Phi_h \right) 
$$
so that (\ref{Uode1}) simplifies to
\begin{multline}\label{Uode2}
 {b} \left(\frac{\partial \tilde{U}_{h\tau}}{\partial t}, \Phi_h
   \right) + \left(\mathcal{A} \nabla \tilde{U}_{h\tau}, \Phi_h\right) \\
   = \left(p_n \mathcal{A} \nabla \partial_n^2 U_h, \nabla \Phi_h\right) + 
   {b}\left( \left(t - t_{n + 1/2}\right) \partial_n F + F^{n+1/2}, \Phi_h
   \right) \\
   = \left(p_n \mathcal{A} \nabla \partial_n^2 U_h, \nabla \Phi_h\right) + 
   {b}\left( \tilde{F}_\tau - p_n \partial^2_n F, \Phi_h\right) 
\end{multline}
where
\begin{align*} p_n &= \frac{\tau_{n - 1}}{2}  (t - t_{n + 1/2}) + \frac{1}{2}
   (t - t_{n + 1})  (t - t_n), \\
  \tilde{F}_{\tau} (t) &= F^{n + 1}_h + (t - t_{n + 1}) \partial_{n +
  1/2} F + \frac{1}{2}  (t - t_{n + 1})  (t - t_n) \partial_n^2 F.
\end{align*}

Introduce the error between reconstruction $\tilde{U}_{h\tau}$ and solution
$U$ to problem (\ref{ODEf}) :
\begin{equation}
  E = \tilde{U}_{h\tau} - U
\end{equation}
or, component-wise
$$
E = \begin{pmatrix}
     E_u\\
     E_v
   \end{pmatrix} = \begin{pmatrix}
     \tilde{u}_{h\tau} - u\\
     \tilde{v}_{h\tau} - v
   \end{pmatrix}
$$
Taking the difference between (\ref{Uode2}) and (\ref{ODEf}) we obtain the residual differential equation for the error valid for $t \in [t_n, t_{n
+ 1}]$, $n = 1, \ldots, N-1$
\begin{align}  
\label{errode}{b}({\partial}_{t}E,{\Phi})+(\mathcal{A}{\nabla} E,{\nabla} {\Phi})
  &={b} \left(\cfrac{{\partial} \tilde{U}_{{\tau} h}}{\partial t}-F,{\Phi}-{\Phi}_{h}\right)+\left(\mathcal{A}{\nabla} \tilde{U}_{{\tau} h},{\nabla} ({\Phi}-{\Phi}_{h})\right)\\
\notag  +\left(p_n \mathcal{A}{\nabla} {\partial}_{n}^{2} U_{h},{\nabla} {\Phi}_{h}\right)
  &+{b}\left( \tilde{F}_\tau -F - p_n \partial^2_n F, \Phi_h\right), \hspace{1em} \forall \Phi_h \in V_h^2 
\end{align}

Now we take $\Phi = E$, $\Phi_h = \begin{pmatrix}
  \Pi_h E_u\\
  \tilde I_h E_v
\end{pmatrix}$ where $\Pi_h : H^1_0 (\Omega) \to V_h$ is the
$H^1_0$-orthogonal projection operator (\ref{Pih}) and $\tilde I_h : H^1_0 (\Omega) \to V_h$ is a
Cl{\'e}ment-type interpolation operator satisfying $\tilde I_h=Id$ on $V_h$ and (\ref{Clement}). Noting that $( \mathcal{A} \nabla E,
\nabla E) = 0$ and
$$
\left( \nabla \cfrac{\partial \tilde{u}_{h\tau}}{\partial t}, \nabla ( E_u -
\Pi_h E_u)\right) = \left( \nabla \tilde{v}_{h\tau}, \nabla \left( E_u - \Pi_h E_u\right)\right) = 0
$$
Introducing operator $A_h:V_h\to V_h$ such that
\begin{equation}\label{Ah}
\left(A_h w_h, \varphi_h\right) =(\nabla w_h, \nabla\varphi_h),
\quad\forall\varphi_h\in V_h
\end{equation} 
we get
\begin{align*}
  \left(\cfrac{{\partial} E_{v}}{\partial t},E_{v}\right)+\left({\nabla} E_{u},{\nabla} \cfrac{{\partial} E_{u}}{\partial t}\right)=\left(\cfrac{{\partial}\tilde{v}_{{\tau} h}}{\partial t}-f,E_{v}-{\Pi}_{h} E_{v}\right)+\left({\nabla} \tilde{u}_{{\tau}h},{\nabla}\left(E_{v}-\tilde I_{h} E_{v}\right)\right)\\
  + \left(p_n \left(A_h {\partial}_{n}^{2} u_{h}-{\partial}_{n}^{2}f_h\right),\tilde I_{h} E_{v}\right)-\left(p_n {\nabla}{\partial}_{n}^{2} v_{h},{\nabla}E_{u}\right)
  +\left(\tilde{f}_\tau-f,\tilde I_{h} E_{v}\right).
\end{align*}
Note that equation similar to (\ref{errode}) also holds for $t \in [t_0, t_{1}]$
\begin{align} 
\label{errorode1st}{b}({\partial}_{t}E,{\Phi})+(\mathcal{A}{\nabla} E,{\nabla} {\Phi})
  &={b} \left(\cfrac{{\partial} \tilde{U}_{{\tau} h}}{\partial t}-F,{\Phi}-{\Phi}_{h}\right)+\left(\mathcal{A}{\nabla} \tilde{U}_{{\tau} h},{\nabla} ({\Phi}-{\Phi}_{h})\right)\\
\notag  &+\left(p_1 \mathcal{A}{\nabla} {\partial}_{1}^{2} U_{h},{\nabla} {\Phi}_{h}\right)
  + {b}\left( \tilde{F}_\tau -F - p_1 \partial^2_1 F, \Phi_h\right). 
\end{align}
That follows from the definition of the piecewise quadratic reconstruction $\tilde{u}_{h\tau} (t)$ for $t \in [t_0, t_{1}]$. Integrating (\ref{errode}) and (\ref{errorode1st}) in time from 0 to some $t^{\ast}\geq t_1$ yields
\begin{align}
  \notag  & {\frac{1}{2}} \left(|E_{u}|^{2}_{H^1(\Omega)}+\| E_{v}\|_{L^2(\Omega)}^{2}\right)(t^{{\ast}}) \\
  &=
  {\frac{1}{2}} \left(|E_{u}|^{2}_{H^1(\Omega)}+\| E_{v}\|_{L^2(\Omega)}^{2}\right)(0)
  \notag\\
  &+ \int_{0}^{t^{{\ast}}}\left(\cfrac{{\partial\tilde{v}_{{\tau} h}}}{\partial t}-f, E_{v}-\tilde I_{h} E_{v}\right) d t
  +\int_{0}^{t^{{\ast}}}\left({\nabla} \tilde{u}_{{\tau} h},{\nabla} ( E_{v}-\tilde I_{h}  E_{v})\right) d t
 \notag \\
  &+\int_{t_1}^{t^{{\ast}}}\left[\left(p_n \left(A_h {\partial}_{n}^{2} u_{h}-{\partial}_{n}^{2}f_h\right),\tilde I_{h} E_{v}\right)-\left(p_n {\nabla}{\partial}_{n}^{2} v_{h},{\nabla}E_{u}\right)
  +\left(\tilde{f}_\tau-f,\tilde I_{h} E_{v}\right)\right] d t \notag\\
  &+\int_{0}^{t_1}\left[\left(p_1 \left(A_h {\partial}_{1}^{2} u_{h}-{\partial}_{1}^{2}f_h\right),\tilde I_{h} E_{v}\right)-\left(p_1 {\nabla}{\partial}_{1}^{2} v_{h},{\nabla}E_{u}\right)
  +\left(\tilde{f}_\tau-f,\tilde I_{h} E_{v}\right)\right] d t 
  \notag\\
  &\hspace{1cm}:=I+I I+I I I+IV .\notag
  \\
    \label{4newterms} 
  &
\end{align}
Let
\begin{equation*}
 Z (t) = \sqrt{| E_u |_{H^1(\Omega)}^2 + \| E_v \|_{L^2(\Omega)}^2}
\end{equation*}
and assume that $t^{\ast}$ is the point in time where $Z$ attains its maximum and $t^{\ast} \in (t_n, t_{n + 1}]$
for some $n$. Observe
\begin{equation*}
 (I-\tilde I_h)E_v  = (I-\tilde I_h)(\tilde{v}_{h\tau}-v )
  = (I-\tilde I_h)\left(\cfrac{\partial\tilde{u}_{h\tau}}{\partial t}-\cfrac{\partial u}{\partial t} \right)
  = \cfrac{\partial}{\partial t}(I-\tilde I_h)E_{u}
\end{equation*}
since $( I - \tilde I_h) \varphi_h =0$ for any $\varphi_h\in V_h$.
We thus get for the first and second terms in (\ref{4newterms})
\begin{align*}
  I + I I &
  =\int_{0}^{t^{{\ast}}}\left(\cfrac{{{\partial } \tilde{v}_{{\tau} h}}}{\partial t}-f,\cfrac{\partial}{\partial t} (E_{u}-\tilde I_{h} E_{u})\right) d t+\int_{0}^{t^{{\ast}}}\left({\nabla} \tilde{u}_{{\tau} h},\cfrac{\partial}{\partial t} {\nabla} (E_{u}-\tilde I_{h}  E_{u})\right) dt.
\end{align*}
We now integrate by parts with respect to time in the two integrals above.
Let us do it for the first term:
\begin{align*}
\nonumber & \int_{0}^{t^{{\ast}}}\left(\cfrac{{{\partial } \tilde{v}_{{\tau} h}}}{\partial t}-f,\cfrac{\partial}{\partial t} (E_{u}-\tilde I_{h} E_{u})\right) d t \\
\nonumber & =
   \sum_{m = 0}^n \int_{t_m}^{\min (t_{m + 1}, t^{\ast})}\left(\cfrac{{{\partial } \tilde{v}_{{\tau} h}}}{\partial t}-f,\cfrac{\partial}{\partial t} (E_{u}-\tilde I_{h} E_{u})\right) d t \\
 \nonumber & = \left( \cfrac{{{\partial } \tilde{v}_{{\tau} h}}}{\partial t}-f, E_u -\tilde  I_h E_u \right)
   (t^{\ast}) - \sum_{m = 1}^n \left( \left[ \cfrac{{{\partial } \tilde{v}_{{\tau} h}}}{\partial t} \right]_{t_m}, (  E_u - \tilde I_h E_u) (t_n) \right) \\
 & - \sum_{m = 0}^n \int_{t_m}^{\min (t_{m + 1}, t^{\ast})} \left(
   \cfrac{{{\partial^2 } \tilde{v}_{{\tau} h}}}{\partial t^2}-\cfrac{\partial {f}}{\partial t},  E_u -\tilde  I_h E_u
   \right) dt .
\end{align*}
Here $[\cdot]_{t_n}$ denotes the jump with respect to time, i.e. 
$$[w]_{t_n} = \lim_{t
\rightarrow t_n^+} w ( t) - \lim_{t \rightarrow t_n^-} w ( t).$$
 Using the
same trick in the other term we can finally write
\begin{align}
  \nonumber I +I I &
  =\left( \cfrac{{{\partial } \tilde{v}_{{\tau} h}}}{\partial t}-f, E_u - \tilde I_h E_u \right)
   (t^{\ast})+\left({\nabla} \tilde{u}_{{\tau} h},{\nabla} ( E_{u}-\tilde I_{h} E_{u})\right)(t^{{\ast}})\\
  \nonumber &  -\sum_{m = 1}^n \left( \left[ \cfrac{{{\partial } \tilde{v}_{{\tau} h}}}{\partial t} \right]_{t_m}, (  E_u - \tilde I_h E_u) (t_n) \right)\\
  \nonumber & - \sum_{m = 0}^n \int_{t_m}^{\min (t_{m + 1}, t^{\ast})} \left(
   \cfrac{{{\partial^2 } \tilde{v}_{{\tau} h}}}{\partial t^2}-\cfrac{\partial {f}}{\partial t},  E_u -\tilde  I_h E_u
   \right) dt\\&-\sum_{m=0}^{n}\int_{t_{m}}^{min
  (t_{m+1},t^{{\ast}})}\left({\nabla} \cfrac{{{\partial } \tilde{u}_{{\tau} h}}}{\partial t},{\nabla} ( E_{u}-\tilde I_{h} E_{u})\right) d t.
\end{align}
We have used here a simple expression for the jump of time of
${\partial
\tilde{v}_{h\tau}}/\partial t$
\begin{equation}\left[ \cfrac{\partial \tilde{v}_{h\tau}}{\partial t} \right]_{t_n} = {\tau_{n
   - 1}}{2}  (\partial_n^2 v_h - \partial_{n - 1}^2 v_h) 
\end{equation}
and noted that $\tilde{u}_{h\tau}$ is continuous in time.

  Integration by parts element by element over $\Omega$ and interpolation estimates (\ref{Clement}) yield
\begin{align*}
  I+II
  &\leq C_{1}\Biggl[\sum_{K{\in}\mathcal{T}_h}h_{K}^{2}\left\Vert\cfrac{{\partial}\tilde{v}_{h{\tau}}}{\partial t}-{\Delta}\tilde{u}_{h{\tau}}-f\right\Vert_{L^2( K)}^{2}\\
 &~~~~~~~~~~~~~~~~~~~~~~~~~~~~+\sum_{
E \in \mathcal{E}_h}h_{E}\left\|[n{\cdot}{\nabla}\tilde{u}_{h{\tau}}]\right\|_{L^2(E)}^{2}\Biggr]^{1/2}(t^{{\ast}})|E_{u}|_{H^1(\Omega)}(t^{{\ast}}) \\
  &+C_{1}\Biggl[\sum_{K{\in}\mathcal{T}_h}h_{K}^{2}\left\Vert\cfrac{{\partial}\tilde{v}_{h{\tau}}}{\partial t}-{\Delta}\tilde{u}_{h{\tau}}-f\right\Vert_{L^2(K)}^{2}\\
  &~~~~~~~~~~~~~~~~~~~~~~~~~~~~~~+\sum_{
E \in \mathcal{E}_h}h_{E}\left\|[n{\cdot}{\nabla}\tilde{u}_{h{\tau}}]\right\|_{L^2(E)}^{2}\Biggl]^{1/2}(0)|E_{u}|_{H^1(\Omega)}(0)\\
  &
  +C_{2}\sum_{m=1}^{n}\frac{{\tau}_{m-1}}{2}\left[\sum_{K{\in}\mathcal{T}_h}h_{K}^{2}\left\|{\partial}_{m}^{2}v_{h}-{\partial}_{m-1}^{2}v_{h}\right\|_{L^2( K)}^{2}\right]^{1/2}|E_{u}|_{H^1(\Omega)}(t_{m})\\
  &
  +C_{3}\sum_{m=0}^{n}\int_{t_{m}}^{min(t_{m+1},t^{{\ast}})}\Biggl[\sum_{K{\in}\mathcal{T}_h}h_{K}^{2}\left\Vert\cfrac{{\partial}^{2}\tilde{v}_{h{\tau}}}{\partial t^2}-{\Delta}\cfrac{{\partial}\tilde{u}_{{\tau}h}}{\partial t}-\cfrac{\partial f}{\partial t}\right\Vert_{L^2(K)}^{2}\\
  &~~~~~~~~~~~~~~~~~~~~~~~~~~~~+\sum_{
E \in \mathcal{E}_h}h_{E}\left\Vert\left[n{\cdot}{\nabla}\cfrac{{\partial}\tilde{u}_{{\tau}h}}{\partial t}\right]\right\Vert_{L^2(E)}^{2}\Biggr]^{1/2 }(t)|E_{u}|_{H^1(\Omega)}(t)dt.
\end{align*}
We turn now to the third term in (\ref{4newterms})
\begin{align*}
 I I I &= \int_{t_1}^{t^{\ast}} \{(p_n (A_h {\partial}_{n}^{2} u_{h}-{\partial}_{n}^{2}f_h),\tilde I_{h} E_{v})-\left(p_n {\nabla}{\partial}_{n}^{2} v_{h},{\nabla}E_{u}\right)
  +(\tilde{f}_\tau-f,\tilde I_{h} E_{v}) \}dt \\
   & \leq C
   \sum_{m = 1}^n \Biggl[ \left( \int_{t_m}^{t^{}_{m + 1}} |p_m|{dt} \right) 
   \left( \left\Vert \partial_m^2 f_h - A_h \partial_m^2 u_h \right\Vert_{L^2(\Omega)} + \left\vert \partial_m^2 v_h\right\vert_{H^1(\Omega)}\right)\\
  &+\int_{t_m}^{t^{}_{m + 1}} \left\Vert f-\tilde{f}_\tau \right\Vert_{L^2(\Omega)} {dt} \Biggr] Z ( t^{^{\ast}}) \\
\end{align*}
with
$$
\int_{t_m}^{t_{m + 1}} |p_m|{dt} \leq \frac{1}{12}{\tau}_{m}^{3}+\frac{1}{8}{\tau}_{m-1}{\tau}_{m}^2.
$$
We have used here the bounds $|E_u |_{H^1(\Omega)} (t) \leqslant Z (t) \leqslant Z
(t^{\ast})$ and $\|E_v \|_{L^2(\Omega)} \leqslant Z (t) \leqslant Z (t^{\ast})$ for all $t
\in [0, t^{\ast}]$. Similar reasoning for the fourth term in (\ref{4newterms}) give us
\begin{align*}
 I V &= \int_{t_0}^{t_1} \{(p_1 (A_h {\partial}_{1}^{2} u_{h}-{\partial}_{1}^{2}f_h),\tilde I_{h} E_{v})-\left(p_1 {\nabla}{\partial}_{1}^{2} v_{h},{\nabla}E_{u}\right)
  +(\tilde{f}_\tau-f,\tilde I_{h} E_{v}) \}dt \\
   & \leq C
    \Biggl[ \left( \int_{t_0}^{t^{}_{ 1}} |p_1|{dt} \right) 
   \left( \left\Vert \partial_1^2 f_h - A_h \partial_1^2 u_h \right\Vert_{L^2(\Omega)} + \left\vert \partial_1^2 v_h\right\vert_{H^1(\Omega)}\right)\\
   &+\int_{t_0}^{t^{}_{1}} \left\Vert f-\tilde{f}_\tau \right\Vert_{L^2(\Omega)} {dt} \Biggr] Z ( t^{^{\ast}}) \\
\end{align*}
where
$$
\int_{t_0}^{t_{1}} |p_1|{dt} \leq \frac{5}{12}{\tau}_{0}^{3}+\frac{1}{2}{\tau}_{1}{\tau}_{0}^2.
$$
Applying the same bounds for $|E_u |_{H^1(\Omega)} (t)$ and $\|E_v \|_{L^2(\Omega)} \leqslant Z (t)$ to the estimates for integrals  $I+II$, inserting them into  (\ref{4newterms}) and noting that $A_h\partial^2_k u_h=z^k_h$ we obtain (\ref{estf}).
\end{proof}

\begin{remark}
 Comparing the a priori estimate (\ref{apriori}) with the a posteriori one (\ref{estf}) one sees that the time error indicator is essentially the same in both cases. Indeed, the term $\displaystyle\int_{t_n}^{t_{n + 1}} \left\Vert \cfrac{\partial^4 u }{\partial t^4} \right\Vert_{L^2(\Omega)} {dt}$ can be rewritten as $\displaystyle\int_{t_n}^{t_{n + 1}} \left\Vert \cfrac{\partial^2 f }{\partial t^2} + \Delta \cfrac{\partial^2 u }{\partial t^2} \right\Vert_{L^2(\Omega)} {dt}$ and it's discrete counterpart is in \ref{in} and \ref{in0step}. 
 Note also that the last term in (\ref{estf}) is negligible, at least if $f$ the sufficiently smooth in time, since $\|f-\tilde{f}_\tau\|_{L^2(\Omega)}=O(\tau_n^3)$ for $t\in(t_n,t_{n+1})$.
 
Moreover, in view of a posteriori estimate some of the terms are of higher order $\tau h^2$, so that neglecting the higher order terms, a posteriori space error estimator can be reduced to the two first lines in (\ref{space}), i.e.
 \begin{align}
 \label{space1}  \eta_S^{(1)} (t_k) &= C_1 \max_{0 \leqslant t \leqslant t_k} \Biggl[ \sum_{K \in \mathcal{T}_h}
   h_K^2  \left\Vert \frac{\partial \tilde{v}_{h \tau}}{\partial t} - \Delta \tilde{u}_{h \tau}-f
   \right\Vert_{L^2(K)}^2\\
   \notag&~~~~~~~~~~~~~~~~~~~~~~~~~~~~~~~~~~~~~~+ \sum_{
E \in \mathcal{E}_h}h_{E} \|[n \cdot \nabla \tilde{u}_{h
   \tau}]\|_{L^2(E)}^2 \Biggr]^{1/2} (t),\\
\label{space2}
 \eta_S^{(2)} (t_k)&= C_2\sum_{m = 0}^k \int_{t_m}^{t_{m + 1}} \Biggl[ \sum_{K \in \mathcal{T}_h} h_K^2
  \left\Vert \frac{\partial^2 \tilde{v}_{h \tau}}{\partial t^2} - \Delta \frac{\partial
  \tilde{u}_{h\tau}}{\partial t} -\frac{\partial{f}}{\partial{t}}\right\Vert_{L^2(K)}^2\\
  \notag&~~~~~~~~~~~~~~~~~~~~~~~~~~~~~~~~+ \sum_{
E \in \mathcal{E}_h}h_{E} \left\Vert\left[n \cdot
  \nabla \frac{\partial \tilde{u}_{h\tau}}{\partial t}\right]\right\Vert_{L^2(E)}^2
  \Biggr]^{1/2} (t) dt
 \end{align}
\end{remark}

\subsection{Optimality of the error estimators}\label{optimality}
We do not have a lower bound for our error estimators in space and time. Note that such a bound is not available even in a simpler setting of Euler discretization in time, cf. \cite{BS}. We are going to prove a partial result in the direction of optimality, namely that the indicator of error in time provides the estimate of order $\tau^2$ at least on sufficiently smooth solutions and quasi-uniform meshes. For this, we should examine if the quantities $\partial_n^2 f_h - A_h \partial_n^2 u_h$ and $\partial^2_n v_h$ remain bounded in $L^2$ and $H^1$ norms respectively.  This will be achieved in Lemma \ref{bound_d4u} assuming that the initial conditions are discretized in a specific way, via the $H^1_0$-orthogonal projection.

We restrict ourselves to the constant time steps $\tau_n=\tau$ and introduce the notations
  \begin{align*}
    {\partial}^{0}_{n} u_h&=u_h^{n+1},&&\quad
    {\partial}^{j+1}_{n} u_h={\frac{{\partial}^{j}_{n}u_h-{\partial}^{j}_{n-1}u}{{\tau}}},
    &&\quad
    j=0,1,{\ldots},\quad n\ge j-1\\
    \bar{\partial}^0_n {u}_h&=\frac{u_h^{n + 1} + u_h^n}{2},&&\quad\bar{\partial}^{j + 1}_n  {u}_h= \frac{\bar{\partial}^j_n
       {u}_h - \bar{\partial}^j_{n - 1} {u}_h}{\tau},     
    &&\quad
    j=0,1,{\ldots},\quad n\ge j
  \end{align*}
The Crank-Nicolson scheme for first-order system (\ref{CNh1})-(\ref{CNh2}) for $n\ge 0$ is written with these notations as  
\begin{align}
   \label{CNh1N}
   {\partial}^{1}_{n} u_h - \bar{\partial}^0_n {v}_h &= 0\\
   \label{CNh2N}
   \left( {\partial}^{1}_{n} v_h, \varphi_h \right) + \left( \nabla \bar{\partial}^0_n {u}_h, \nabla \varphi_h \right) &= \left(  \bar{\partial}^0_n {f}_h, \varphi_h \right), \hspace{1em} \forall \varphi_h \in V_h
\end{align}
where $f^n_h$, $n\ge 0$, are the $L^2$-orthogonal projection of $f(t_n,\cdot)$ on $V_h$. The following lemma provides a higher regularity result on the discrete level, i.e. the boundedness of terms $\partial_n^j f_h - A_h \partial_n^j u_h$ and $\partial^j_n v_h$ for any $j\in\mathbb{N}^0$.

\begin{lemma}\label{bound_d4u_abst} Let $u_h^n$ and $v_h^n$ be the solution to (\ref{CNh1})-(\ref{CNh2}) for $n\ge 0$. One has then for all $j\in\mathbb{N}^0$, $N\in\mathbb{N}$, $N\ge j$
\begin{multline}\label{boundj}
 \left( \left\| \partial^{j}_N f_h-A_h\partial^{j}_N u_h \right\|_{L^2(\Omega)}^2 + \left| {\partial}^j_N v_h \right|_{H^1(\Omega)}^2\right)^{{1}/{2}} \\
 \leq \left( \left\| \partial^{j}_jf_h-A_h\partial^{j}_j u_h \right\|_{L^2(\Omega)}^2 + \left|{\partial}^j_j v_h \right|_{H^1(\Omega)}^2\right)^{{1}/{2}} + \tau\sum_{n = j+1}^N\left\| {\partial}^{j+1}_n {f}\right\|_{L^2(\Omega)}
\end{multline}
\end{lemma}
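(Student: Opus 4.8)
The plan is to exploit that all the difference operators $\partial^j_n$, $\partial^1_n$ and $\bar\partial^0_n$ are constant-coefficient finite differences in the index $n$, hence commute with one another and with the time-independent operator $A_h$ of (\ref{Ah}). First I would rewrite the scheme (\ref{CNh1N})--(\ref{CNh2N}) in terms of $A_h$: since every quantity lies in $V_h$, equation (\ref{CNh2N}) is equivalent to the pointwise identities $\partial^1_n u_h=\bar\partial^0_n v_h$ and $\partial^1_n v_h=\bar\partial^0_n f_h-A_h\bar\partial^0_n u_h$, valid for $n\ge 0$.

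Then I would apply $\partial^j$ to both identities. By commutativity the differenced sequences $\partial^j u_h$ and $\partial^j v_h$ solve exactly the same Crank--Nicolson system, now with right-hand side $\partial^j f_h$ and valid for $n\ge j$. Introducing the two quantities of interest, $G_n:=\partial^j_n f_h-A_h\partial^j_n u_h$ (to be measured in $L^2$) and $\partial^j_n v_h$ (to be measured in $H^1$), the differenced scheme yields a closed two-term recursion: the $v$-equation gives $(\partial^j_{n+1}v_h-\partial^j_n v_h)/\tau=(G_{n+1}+G_n)/2$, while differencing $G$ and using the $u$-equation (together with $\partial^1_n\partial^j f_h=\partial^{j+1}_{n+1}f_h$) gives $(G_{n+1}-G_n)/\tau=\partial^{j+1}_{n+1}f_h-A_h(\partial^j_{n+1}v_h+\partial^j_n v_h)/2$.

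The heart of the proof is a discrete energy identity for this recursion. I would test the $G$-equation in $L^2(\Omega)$ with $(G_{n+1}+G_n)/2$; the left-hand side telescopes to $(\|G_{n+1}\|_{L^2(\Omega)}^2-\|G_n\|_{L^2(\Omega)}^2)/2\tau$, and the coupling term $(A_h\bar\partial^0_n\partial^j v_h,(G_{n+1}+G_n)/2)$ equals $(\nabla\bar\partial^0_n\partial^j v_h,\tfrac12\nabla(G_{n+1}+G_n))$ and, after using the $v$-equation to replace $(G_{n+1}+G_n)/2$ by $(\partial^j_{n+1}v_h-\partial^j_n v_h)/\tau$, collapses to the $H^1$ increment $(|\partial^j_{n+1}v_h|_{H^1(\Omega)}^2-|\partial^j_n v_h|_{H^1(\Omega)}^2)/2\tau$. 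Setting $E_n=(\|G_n\|_{L^2(\Omega)}^2+|\partial^j_n v_h|_{H^1(\Omega)}^2)^{1/2}$, this produces $\tfrac1{2\tau}(E_{n+1}^2-E_n^2)=(\partial^{j+1}_{n+1}f_h,(G_{n+1}+G_n)/2)$. Cauchy--Schwarz together with $\|G_n\|_{L^2(\Omega)}\le E_n$ then gives $E_{n+1}-E_n\le\tau\|\partial^{j+1}_{n+1}f_h\|_{L^2(\Omega)}$, and summing over $n=j,\dots,N-1$ yields (\ref{boundj}) up to replacing $f_h$ by $f$, which is legitimate because $\partial^{j+1}_n f_h$ is the $L^2$-orthogonal projection of $\partial^{j+1}_n f$ and projection is an $L^2$-contraction.

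The step I expect to require the most care is the energy identity itself: one must choose the test function so that the mixed term produced by the coupling between $G_n$ and $\partial^j_n v_h$ does not merely get bounded but \emph{exactly} telescopes into the $H^1$ part of the energy. This is what makes the estimate clean, with no Gronwall-type factor, and is precisely the reason the two quantities must be measured in the complementary $L^2$ and $H^1$ norms. A secondary point to verify is the bookkeeping of the index ranges, ensuring that the differenced scheme holds for every $n\ge j$ so that the summation starts exactly at the base term $E_j$.
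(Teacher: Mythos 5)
Your proof is correct and follows essentially the same route as the paper: your energy identity, obtained by testing the $G$-equation with $(G_{n+1}+G_n)/2$, is (by symmetry of the $L^2$ inner product) exactly the paper's pairing of (\ref{CNh2NN}) with $\tau A_h\partial^{j+1}_n u_h-\tau\partial^{j+1}_n f_h=-\left(G_n-G_{n-1}\right)$, and it yields the same telescoping of $\|G_n\|_{L^2(\Omega)}^2+|\partial^j_n v_h|_{H^1(\Omega)}^2$, the same increment bound $E_n-E_{n-1}\le\tau\|\partial^{j+1}_n f_h\|_{L^2(\Omega)}$, and the same summation. Your explicit justification of replacing $f_h$ by $f$ via the $L^2$-contractivity of the projection $P_h$ is a detail the paper leaves implicit, not a different argument.
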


\begin{proof}
Starting from (\ref{CNh1N})-(\ref{CNh2N}), taking the differences between steps $n$ and $n - 1$
and then making an induction on $j = 0, 1, \ldots$ one arrives at
\begin{align}
   \label{CNh1NN}
   {\partial}^{j+1}_{n} u_h &= \bar{\partial}^j_n {v}_h, \\
   \label{CNh2NN}
    {\partial}^{j+1}_{n} v_h  &=   \bar{\partial}^j_n {f}_h - A_h \bar{\partial}^j_n {u}_h. 
\end{align}
One can also prove that  $\forall w^n_h\in V_h$ 
\begin{equation}\label{propj}
 \bar{\partial}^j_n {w}_h = \frac{{\partial}^j_n w_h +
   {\partial}^j_{n - 1} w_h}{2}, \hspace{1em} j = 0, 1, \ldots
\end{equation}
Indeed, this is obvious for $j = 0$ and then it follows for any $j$ by induction.

Taking the inner product of (\ref{CNh2NN}) with $\tau A_h{\partial}^{j + 1}_n u_h-\tau {\partial}^{j + 1}_n f_h$, using (\ref{propj}) and definition of ${\partial}^{j+1}_n$  we obtain
\begin{align*}
 \biggl(\partial_n^{j+1} v_h ,
   \tau A_h{\partial}^{j + 1}_n u_h&-\tau {\partial}^{j + 1}_n f_h \biggr)=\biggl(\bar{\partial}_n^{j} f_h - A_h\bar{\partial}_n^{j} u_h, \tau A_h{\partial}^{j + 1}_n u_h-\tau {\partial}^{j + 1}_n f_h \biggr)\\
&= -\frac{\left\| \partial^{j}_n f_h-A_h\partial^{j}_n u_h  \right\|_{L^2(\Omega)}^2}{2}
   + \frac{\left\|  \partial^{j}_{n-1} f_h-A_h\partial^{j}_{n-1} u_h  \right\|_{L^2(\Omega)}^2}{2}.
\end{align*}
Now we apply (\ref{propj}) and (\ref{CNh1NN}) to the left-hand side above 
\begin{align*}
 \biggl(\partial_n^{j+1} v_h ,
   \tau A_h{\partial}^{j + 1}_n u_h&-\tau {\partial}^{j + 1}_n f_h \biggr)
   \\
    &=\left({\partial_n^{j } v_h - \partial_{n-1}^{j} v_h} , A_h{\partial}^{j + 1}_n u_h\right) 
   -\left(\partial_n^{j+1 } v_h , \tau{\partial}^{j + 1}_n f_h\right)\\
   &=  \frac{\left| {\partial}^j_n v_h \right|^2_{H^1(\Omega)} - \left| {\partial}^j_{n - 1}
   v_h \right|_{H^1(\Omega)}^2}{2}
   -\left(\partial_n^{j+1 } v_h , \tau{\partial}^{j + 1}_n f_h\right).
\end{align*}
Thus \begin{align*}
  \frac{\left| {\partial}^j_n v_h \right|^2_{H^1(\Omega)} - \left| {\partial}^j_{n - 1}
   v_h \right|_{H^1(\Omega)}^2}{2}
   -\left(\partial_n^{j+1 } v_h , \tau{\partial}^{j + 1}_n f_h\right)&= -\frac{\left\| \partial^{j}_n f_h-A_h\partial^{j}_n u_h  \right\|_{L^2(\Omega)}^2}{2}\\
   &+ \frac{\left\|  \partial^{j}_{n-1} f_h-A_h\partial^{j}_{n-1} u_h  \right\|_{L^2(\Omega)}^2}{2}.
\end{align*}
We recall by (\ref{CNh2NN}) 
\begin{align*}
\tau{\partial}^{j+1}_{n}v_h&=\tau\left( \bar{\partial}^{j}_{n}f_h-A_h\bar{\partial}^{j}_{n} u_h \right)\\
&= \frac{\tau}{2}\left({\partial}^{j}_{n}f_h+{\partial}^{j}_{n-1}f_h-A_h{\partial}^{j}_{n-1} u_h - A_h{\partial}^{j}_{n-1} u_h \right)
\end{align*}
and hence 
\begin{align*}
\left\| \partial^{j}_nf_h-A_h\partial^{j}_n u_h \right\|_{L^2(\Omega)}^2 + \left| {\partial}^j_{n} v_h \right|_{H^1(\Omega)}^2-\left\| \partial^{j}_{n-1}f_h-A_h\partial^{j}_{n-1} u_h \right\|_{L^2(\Omega)}^2 - \left| {\partial}^j_{n-1} v_h \right|_{H^1(\Omega)}^2 \\
\leq \tau\left\| {\partial}^{j+1}_n {f}_h \right\|_{L^2(\Omega)}\left(\left\| \partial^{j}_nf_h-A_h\partial^{j}_n u_h \right\|_{L^2(\Omega)}+ \left\| \partial^{j}_{n-1}f_h-A_h\partial^{j}_{n-1} u_h \right\|_{L^2(\Omega)}\right).
\end{align*}
Denoting $Z_n=\left(\left\| \partial^{j}_nf_h-A_h\partial^{j}_n u_h \right\|_{L^2(\Omega)}^2 + \left| {\partial}^j_{n} v_h \right|_{H^1(\Omega)}^2\right)^{{1}/{2}}$ the last inequality can be rewritten as
\begin{align*}
Z_n^2-Z_{n-1}^2
&\leq \tau\left\| {\partial}^{j+1}_n {f}_h \right\|_{L^2(\Omega)}\left(\left\| \partial^{j}_nf_h-A_h\partial^{j}_n u_h \right\|_{L^2(\Omega)} \right.\\
& \left.+\left\| \partial^{j}_{n-1}f_h-A_h\partial^{j}_{n-1} u_h \right\|_{L^2(\Omega)}\right)
\leq \tau\left\| {\partial}^{j+1}_n {f}_h \right\|_{L^2(\Omega)} (Z_n+Z_{n-1})
\end{align*}
 so that
\[
Z_n-Z_{n-1} \leq \tau\left\| {\partial}^{j+1}_n {f}_h \right\|_{L^2(\Omega)}. 
\]
Summing this over $n$ we get (\ref{boundj}).
 \end{proof}
 
In order to take into account the initial conditions, we shall need the following auxiliary result about stability properties of operator $A_h$ defined by (\ref{Ah}) and the $L^2$-orthogonal projection  $P_h : L^2 (\Omega) \to V_h$ defined by
\begin{equation}\label{Ph}
 \forall v \in L^2 (\Omega) : \left( P_h v, \varphi_h\right) = \left( v,
   \varphi_h\right) \hspace{1em} \forall \varphi_h \in V_h 
\end{equation}

\begin{lemma} \label{bound_AhPhu}
Assuming the mesh $\mathcal{T}_h$ to be quasi-uniform, there exists  $C>0$ depending only on the regularity of $\mathcal{T}_h$ such that
\begin{eqnarray}\label{Phbound}
\forall v \in H^1_0 (\Omega) & :  & | P_h v |_{H^1(\Omega)} \leq C | v |_{H^1(\Omega)},
\\
\label{AhPhbound}
\forall v \in H^2 (\Omega) \cap H^1_0 (\Omega) & : & \| A_hP_h v \|_{L^2(\Omega)} \leq C | v |_{H^2(\Omega)}
\end{eqnarray} 

\end{lemma}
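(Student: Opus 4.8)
The plan is to derive both inequalities from three standard ingredients: the $L^2$-best approximation property of $P_h$, the interpolation error estimates already recalled in (\ref{Clement}), and the \emph{global} inverse inequality $|w_h|_{H^1(\Omega)}\le Ch^{-1}\|w_h\|_{L^2(\Omega)}$, valid for all $w_h\in V_h$. It is precisely this last ingredient that forces the quasi-uniformity assumption, since it amounts to replacing the local mesh size $h_K$ by the global one $h=\max_{K}h_K$ up to a constant. Throughout, $C$ is a generic constant depending only on the mesh regularity.

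For (\ref{Phbound}) I would compare $P_h v$ with the Scott--Zhang interpolant $\tilde I_h v$, which is itself $H^1$-stable, $|\tilde I_h v|_{H^1(\Omega)}\le C|v|_{H^1(\Omega)}$, cf. \cite{ScoZh}. From $|P_h v|_{H^1(\Omega)}\le |P_h v-\tilde I_h v|_{H^1(\Omega)}+|\tilde I_h v|_{H^1(\Omega)}$ and the inverse inequality applied to $P_h v-\tilde I_h v\in V_h$, it suffices to control $\|P_h v-\tilde I_h v\|_{L^2(\Omega)}$. Inserting $v$, using the $L^2$-best approximation property $\|v-P_h v\|_{L^2(\Omega)}\le\|v-\tilde I_h v\|_{L^2(\Omega)}$, and then the first-order bound $\|v-\tilde I_h v\|_{L^2(\Omega)}\le Ch|v|_{H^1(\Omega)}$ obtained by squaring and summing (\ref{Clement}) over $K\in\Th$, the factor $h$ cancels the $h^{-1}$ coming from the inverse inequality, and (\ref{Phbound}) follows.

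For (\ref{AhPhbound}) set $g_h=A_hP_h v\in V_h$. The crucial step is the identity, valid for every $\varphi_h\in V_h$ and obtained from the definition (\ref{Ah}) of $A_h$ together with an integration by parts against the exact function $v\in H^2(\Omega)\cap H^1_0(\Omega)$:
\[
 (A_hP_h v,\varphi_h)=(\nabla P_h v,\nabla\varphi_h)=(\nabla(P_h v-v),\nabla\varphi_h)+(-\Delta v,\varphi_h).
\]
Taking $\varphi_h=g_h$ gives $\|g_h\|_{L^2(\Omega)}^2=(\nabla(P_h v-v),\nabla g_h)+(-\Delta v,g_h)$. The second term is bounded by $|v|_{H^2(\Omega)}\|g_h\|_{L^2(\Omega)}$. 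For the first, I use $(\nabla(P_h v-v),\nabla g_h)\le|v-P_h v|_{H^1(\Omega)}|g_h|_{H^1(\Omega)}$, then the inverse inequality $|g_h|_{H^1(\Omega)}\le Ch^{-1}\|g_h\|_{L^2(\Omega)}$ and the auxiliary bound $|v-P_h v|_{H^1(\Omega)}\le Ch|v|_{H^2(\Omega)}$; dividing by $\|g_h\|_{L^2(\Omega)}$ yields (\ref{AhPhbound}).

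The auxiliary bound $|v-P_h v|_{H^1(\Omega)}\le Ch|v|_{H^2(\Omega)}$, i.e.\ the first-order $H^1$-convergence of the $L^2$-projection, is the one genuinely non-trivial point and the step I expect to be the main obstacle. I would establish it exactly as in the first part, now comparing $P_h v$ with the nodal interpolant $\mathcal{I}_h v$, which is well defined since $H^2(\Omega)\hookrightarrow C^0(\bar\Omega)$ in two dimensions: the inverse inequality bounds $|P_h v-\mathcal{I}_h v|_{H^1(\Omega)}$ by $Ch^{-1}\|P_h v-\mathcal{I}_h v\|_{L^2(\Omega)}$, the $L^2$-best approximation property together with the second-order estimate $\|v-\mathcal{I}_h v\|_{L^2(\Omega)}\le Ch^2|v|_{H^2(\Omega)}$ makes this $O(h)$, and adding $|v-\mathcal{I}_h v|_{H^1(\Omega)}\le Ch|v|_{H^2(\Omega)}$ gives the claim. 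The merit of this route, resting on the local second-order $L^2$ estimate and the global inverse inequality, is that it avoids any use of full elliptic $H^2$-regularity of $\Omega$ that a duality (Aubin--Nitsche) argument would have demanded; quasi-uniformity is exactly what makes it go through.
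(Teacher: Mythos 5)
Your proof is correct. For (\ref{Phbound}) it coincides with the paper's argument: both compare $P_h v$ with a Cl\'ement/Scott--Zhang interpolant, apply the global inverse inequality to $P_h v-\tilde I_h v\in V_h$, and use the $L^2$-best-approximation property of $P_h$ together with the first-order estimate from (\ref{Clement}). For (\ref{AhPhbound}), however, you take a genuinely different route. The paper splits $(A_hP_hv,\varphi_h)=(\nabla(P_h-\tilde I_h)v,\nabla\varphi_h)+(\nabla\tilde I_hv,\nabla\varphi_h)$, kills the first term with an $h^{-2}$ inverse inequality combined with a \emph{second-order} Cl\'ement estimate $\|P_hv-\tilde I_hv\|_{L^2(\Omega)}\le Ch^2|v|_{H^2(\Omega)}$ (not actually among the bounds stated in (\ref{Clement})), and handles the second by element-wise integration by parts, reducing it to the jump $[\partial_n\tilde I_h v]=[\partial_n(\tilde I_h v-v)]$ on interior edges, estimated via an inverse trace inequality. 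You instead insert the exact $v$, integrate by parts globally (legitimate since $v\in H^2(\Omega)\cap H^1_0(\Omega)$ and $\varphi_h\in H^1_0(\Omega)$) to produce the clean term $(-\Delta v,\varphi_h)$, and reduce everything to the $H^1$-error estimate $|v-P_hv|_{H^1(\Omega)}\le Ch|v|_{H^2(\Omega)}$ for the $L^2$-projection, which you correctly prove by the same inverse-inequality trick against the nodal interpolant (well defined since $H^2(\Omega)\hookrightarrow C^0(\bar\Omega)$ in two dimensions). What each approach buys: yours avoids all edge-jump and trace machinery and makes the single nontrivial ingredient (first-order $H^1$-convergence of $P_h$ under quasi-uniformity) explicit and reusable, at the price of establishing that auxiliary estimate; the paper's stays entirely at the discrete/interpolation level and never needs an error bound for $P_h$ in $H^1$, but quietly invokes the second-order $L^2$ and edge estimates for $\tilde I_h$. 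Both arguments use the global inverse inequality and hence quasi-uniformity in an essential way, and, as you observe, both avoid any Aubin--Nitsche duality and thus any $H^2$-regularity assumption on $\Omega$ --- your remark on that point applies equally to the paper's proof.
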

\begin{proof}
Let $ v \in H^1_0 (\Omega)$. 
Using a Cl{\'e}ment-type interpolation operator $\tilde I_h$, satisfying $\tilde I_h=Id$ on $V_h$ and (\ref{Clement}), together with an inverse inequality we observe 
\[
| P_h v |_{H^1(\Omega)} 
\leq | P_h v  -\tilde I_hv  |_{H^1(\Omega)} + | \tilde I_h v  |_{H^1(\Omega)} \leq \cfrac{C}{h}\| P_h v  -\tilde I_hv  \|_{L^2(\Omega)} + | v |_{H^1(\Omega)}
\]
Then, from approximation properties (\ref{Clement}) 
\[\| P_h v  -v  \|_{L^2(\Omega)} \leq \| \tilde I_h v  -v  \|_{L^2(\Omega)}  \leq C h| v  |_{H^1(\Omega)}
 \leq C h| v |_{H^1(\Omega)}
 \]
which entails (\ref{Phbound}).

We assume now $v\in H^2(\Omega) \cap H^1_0 (\Omega)$ and use a similar idea to prove (\ref{AhPhbound}). For any $\varphi_h\in V_h$
\begin{equation}\label{tr}
\left( A_hP_h v,\varphi_h\right)
= \left(\nabla\left(P_h-\tilde I_h\right) v ,\nabla\varphi_h\right)+\left( \nabla \tilde I_h v ,\nabla\varphi_h\right)
\end{equation}
We can bound the first term in the right-hand side of (\ref{tr}) using the inverse inequality and the approximation properties of $\tilde I_h$  
\[\left(\nabla\left(P_h-\tilde I_h\right) v ,\nabla\varphi_h\right)\leq \cfrac{C}{h^2}\| P_h v  -\tilde I_hv  \|_{L^2(\Omega)}\| \varphi_h \|_{L^2(\Omega)}\leq C| v  |_{H^2(\Omega)}\| \varphi_h \|_{L^2(\Omega)}
\]
To deal with the second term in the right-hand side of (\ref{tr}), we integrate by parts over all the triangles of the mesh and recall that $\Delta\varphi_h=0$ on any triangle, so that
\[
\left( \nabla \tilde I_h v ,\nabla\varphi_h \right)=\sum_{E \in \mathcal{E}_h}\int_{E}\left[\cfrac{\partial \tilde I_h v }{\partial n}\right]\varphi_h
\leq \sum_{E \in \mathcal{E}_h}\left\Vert\left[\cfrac{\partial \tilde I_h v }{\partial n}\right]\right\Vert_{L^2(E)}\left\Vert\varphi_h\right\Vert_{L^2(E)}
\]
Using the inverse trace inequality $\left\Vert\varphi_h\right\Vert_{L^2(E)}\leq \cfrac{C}{\sqrt{h}}\left\Vert\varphi_h\right\Vert_{L^2(\omega_E)}$ and the interpolation error bound  
$$
\left\Vert\left[\cfrac{\partial \tilde I_h v }{\partial n}\right]\right\Vert_{L^2(E)}
=\left\Vert\left[\cfrac{\partial  }{\partial n}(v-\tilde I_h v)\right]\right\Vert_{L^2(E)}
\leq C\sqrt{h}| v  |_{H^2(\omega_{E})}
$$ on all the edges $E\in \mathcal{E}_h$ leads, together with (\ref{tr}), to
$$
\left( A_hP_h v,\varphi_h\right) 
\le 
C| v  |_{H^2(\Omega)}\| \varphi_h \|_{L^2(\Omega)}
$$
Taking here $\varphi_h=A_hP_h v$, we obtain desired result (\ref{AhPhbound}).
\end{proof}

\begin{remark}
Our proof of Lemma \ref{bound_AhPhu} uses  inverse inequalities and is thus restricted to the quasi-uniform meshes $\mathcal{T}_h$. The first estimate (\ref{Phbound}) is actually established in \cite{bramble2002stability} under much milder hypotheses on the mesh compatible with usual mesh refinement techniques. We conjecture that the second estimate (\ref{AhPhbound}) also holds under similar assumptions. Some numerical examples in this direction are given at the end of Subsection \ref{unstr}.
\end{remark}
 
We are now able to complete the estimate of Lemma \ref{bound_d4u_abst} in the case $j=2$ which is pertinent to our a posteriori analysis.

\begin{lemma}\label{bound_d4u} Let $u_h^n$ be the solution to (\ref{Newm1})--(\ref{Newm2}) on a quasi-uniform mesh with
\begin{equation}
u^0_h = \Pi_h u^0,~v^0_h = \Pi_h v^0 \label{initialcond}
\end{equation}
where $\Pi_h$ is the $H^1_0$-orthogonal projection on $V_h$.
 One has for all $N\ge 1$
\begin{align}
\label{boundj4} \biggl(\bigl\| \partial^{2}_Nf_h-A_h&\partial^{2}_N u_h \bigr\|_{L^2(\Omega)}^2  + \left| {\partial}^2_{N} v_h \right|_{H^1(\Omega)}^2\biggr)^{{1}/{2}}\\
\notag &\leq C \left(\left| \cfrac{\partial^{3}u}{\partial t^{3}}(0) \right|_{H^1(\Omega) }  + \left| \cfrac{\partial^{2}u}{\partial t^{2}}(0) \right|_{H^2(\Omega)} + \max_{t \in [0,2\tau]} \left\| \cfrac{\partial^{2}f}{\partial t^{2}}(t)\right\|_{L^2(\Omega)}
  \right)\\
\notag&+ \int_0^{t_N} \left\| \cfrac{\partial^{3}f}{\partial t^{3}}\right\|_{L^{2}(\Omega)}dt 
\end{align}
with a constant $C > 0$ independent of $h$, $\tau$, $N$.
\end{lemma}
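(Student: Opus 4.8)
The plan is to invoke the abstract higher-regularity estimate of Lemma \ref{bound_d4u_abst} with $j=2$ and then to control separately the two objects it produces, namely the discrete ``initial'' term at $n=j=2$ and the forcing sum. For $j=2$ the bound (\ref{boundj}) reads
\begin{multline*}
\left(\|\partial^2_N f_h - A_h\partial^2_N u_h\|_{L^2(\Omega)}^2 + |\partial^2_N v_h|_{H^1(\Omega)}^2\right)^{1/2} \\
\le \left(\|\partial^2_2 f_h - A_h\partial^2_2 u_h\|_{L^2(\Omega)}^2 + |\partial^2_2 v_h|_{H^1(\Omega)}^2\right)^{1/2} + \tau\sum_{n=3}^N \|\partial^3_n f_h\|_{L^2(\Omega)},
\end{multline*}
so it suffices to bound the first term on the right by the data at $t=0$ and the second by $\int_0^{t_N}\|\partial^3 f/\partial t^3\|_{L^2(\Omega)}dt$.

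For the forcing sum I would first pass from $f_h$ to $f$: since $f^n_h=P_h f^n$ and no $A_h$ is applied, $\partial^3_n f_h=P_h\partial^3_n f$, so the $L^2$-stability of the orthogonal projection gives $\|\partial^3_n f_h\|_{L^2(\Omega)}\le\|\partial^3_n f\|_{L^2(\Omega)}$. Then I would use the Peano-kernel (repeated-integration) representation of the third forward difference, $\tau^3\partial^3_n f=\int_0^\tau\!\!\int_0^\tau\!\!\int_0^\tau(\partial^3 f/\partial t^3)(t_{n-2}+s_1+s_2+s_3)\,ds_1\,ds_2\,ds_3$, so that $\tau\|\partial^3_n f\|_{L^2(\Omega)}$ is a weighted average of $\|\partial^3 f/\partial t^3\|_{L^2(\Omega)}$ over $[t_{n-2},t_{n+1}]$ against a kernel of unit mass. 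Summing over $n$ and using Fubini turns the sum into the single integral $\int_0^{t_N}\|\partial^3 f/\partial t^3\|_{L^2(\Omega)}dt$.

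The crux is the initial term. I would unwind the scheme to express $\partial^2_2 v_h$ and $\partial^2_2 f_h-A_h\partial^2_2 u_h=\partial^2_2(f_h-A_h u_h)$ in terms of the discrete initial data $u^0_h=\Pi_h u_0$, $v^0_h=\Pi_h v_0$ and the source values $f^0,\dots,f^3$, using the differentiated scheme relations (\ref{CNh1NN})--(\ref{CNh2NN}) (in particular $\partial^2_n v_h=\bar\partial^1_n(f_h-A_h u_h)$ and $\partial^2_n u_h=\bar\partial^1_n v_h$) together with the first Crank--Nicolson steps. A Taylor expansion in time then identifies, up to $O(\tau)$ corrections of the same type, the leading part of $\partial^2_2 v_h$ with $\partial^3 u/\partial t^3(0)$ measured in $|\cdot|_{H^1(\Omega)}$, and the leading part of $\partial^2_2(f_h-A_h u_h)$ with $\partial^4 u/\partial t^4(0)$; differentiating the PDE twice gives $\partial^4 u/\partial t^4(0)=\Delta(\partial^2 u/\partial t^2)(0)+\partial^2 f/\partial t^2(0)$, whose $L^2$-norm is controlled by $|\partial^2 u/\partial t^2(0)|_{H^2(\Omega)}+\max_{t\in[0,2\tau]}\|\partial^2 f/\partial t^2\|_{L^2(\Omega)}$, exactly the two terms in (\ref{boundj4}).

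The main obstacle is keeping the repeated discrete Laplacians $A_h$ under control after this unwinding: the expansion produces terms such as $A_h v^0_h$, $A_h u^0_h$ and, at higher order, $A_h P_h(\cdot)$, and these are bounded \emph{only} because the initial conditions (\ref{initialcond}) are the elliptic, i.e. $H^1_0$-orthogonal, projections. Indeed, for such data $A_h\Pi_h w=-P_h\Delta w$ by definition (\ref{Ah}), while the quasi-uniform-mesh stability estimates of Lemma \ref{bound_AhPhu}, $\|A_h P_h w\|_{L^2(\Omega)}\le C|w|_{H^2(\Omega)}$ and $|P_h w|_{H^1(\Omega)}\le C|w|_{H^1(\Omega)}$, bound precisely the remaining discrete-Laplacian contributions. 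This is the step where the choice of the elliptic projection for the initial data is decisive; with a cruder discretization such as nodal interpolation these stability bounds fail and the initial term would not be of the claimed order, in line with the phenomenon announced in the introduction.
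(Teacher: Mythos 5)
Your proposal follows the same skeleton as the paper's proof: invoke Lemma \ref{bound_d4u_abst} with $j=2$, bound the forcing sum by $\int_0^{t_N}\|\partial^3 f/\partial t^3\|_{L^2(\Omega)}\,dt$ (the paper treats this exactly as you do), and reduce the discrete initial term to data at $t=0$ via the identity $A_h\Pi_h=-P_h\Delta$, the PDE differentiated at $t=0$, and the quasi-uniformity estimates of Lemma \ref{bound_AhPhu} --- your diagnosis that the elliptic projection of the initial data is what makes $A_h^2u^0_h-A_hf^0_h$ and $A_hv^0_h-\partial_t f_h(0)$ controllable is exactly the paper's mechanism. The one substantive soft spot is your treatment of the initial term by ``unwinding plus Taylor expansion, up to $O(\tau)$ corrections of the same type.'' The paper does \emph{not} Taylor-expand the discrete evolution: it writes the first steps in closed form through the propagation operator $Z=2\left(I+\frac{\tau^2}{4}A_h\right)^{-1}\left(I-\frac{\tau^2}{4}A_h\right)$, arrives at the exact expressions (\ref{expAhd2u})--(\ref{expd2v}) in which every occurrence of $A_h$ appears either inside the benign combinations $A_h^2u^0_h-A_hf^0_h$, $A_hv^0_h-\partial^1_0f_h$ or inside rational functions of $\tau^2A_h$, and then uses the uniform spectral bounds $\|R(\tau^2A_h)v_h\|_{L^2(\Omega)}\le C\|v_h\|_{L^2(\Omega)}$ (degree of numerator $\le$ degree of denominator) and $\|\tau A_hR(\tau^2A_h)v_h\|_{L^2(\Omega)}\le C|v_h|_{H^1(\Omega)}$ (strict inequality of degrees). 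This operator calculus is indispensable, not a technicality: since $\|A_h\|\sim h^{-2}$, a literal truncation in powers of $\tau^2A_h$ --- which is what ``Taylor expansion up to $O(\tau)$ corrections'' amounts to if taken at face value --- leaves remainders of the type $\tau^2A_h^2$ acting on the data, and these are not uniformly small because the lemma assumes no coupling such as $\tau\lesssim h^2$. So to close your argument you must keep the rational functions of $\tau^2A_h$ intact and invoke their uniform boundedness on the spectrum (or an equivalent energy argument), exactly as the paper does between (\ref{expd2v}) and (\ref{boundedness}); with that ingredient added, your plan coincides with the published proof.
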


\begin{proof} Denote
\[ Z = 2 \left( I + \frac{\tau^2}{4} A_h \right)^{- 1} \left( I -
   \frac{\tau^2}{4} A_h \right) \]
Then scheme (\ref{Newm2}) for $n \geq 1$ can be rewritten as
\[ u^{n + 1}_h = Zu_h^n - u^{n - 1}_h + \tau^2 \left( I + \frac{\tau^2}{4} A_h
   \right)^{- 1} \bar{f}^n_h \]
Moreover, the initial step (\ref{Newm1}) can be written as
\begin{equation*}
  \frac{u^1_h - u^0_h - \tau v^0_h}{\tau^2} + A_h ^{} \frac{u^1_h + u^0_h}{4}
  = \bar{f}^0_h := \frac{f^1_h + f^0_h}{4} \label{Newmh0}
\end{equation*}
This gives the following expressions for $u_h^1, u_h^2$:
\begin{align*}
\notag u_h^1 &= \tau^2 \left( I + \frac{\tau^2}{4} A_h \right)^{- 1} \left(
   \bar{f}^0_h + \frac{1}{\tau} v^0_h \right) + \frac{1}{2} Zu^0_h \\
\notag u_h^2 &= \tau^2 \left( I + \frac{\tau^2}{4} A_h \right)^{- 1} \left( Z
   \left( \bar{f}^0_h + \frac{1}{\tau} v^0_h \right) + \bar{f}^1_h \right) +
   \left( \frac{1}{2} Z^2_{} - I \right) u_h^0 
\end{align*}
Thus,
\begin{align*}
 \partial^{2}_1f_h-A_h\partial^{2}_1 u_h &=   \partial^{2}_1f_h - \frac{A^2_hZ}{2\left(I + \frac{\tau^2}{4} A_h \right)}u^0_h \\
 &- {A_h}\left( I + \frac{\tau^2}{4} A_h \right)^{- 1} \left( (Z-2I)
   \left( \bar{f}^0_h + \frac{1}{\tau} v^0_h \right) + \bar{f}^1_h \right)
\end{align*}
and
\begin{align*}
 {\partial}^2_1 v_h = &-A_h\frac{u^2_h-u^0_h}{2\tau}+\frac{f^2_h-f^0_h}{2\tau}=-\frac{A_h}{2 \tau} \left(\frac{1}{2}Z^2 - 2 I\right) u^0_h \\
   &-   \frac{A_h}{2 \tau} \tau^2 \left( I + \frac{\tau^2}{4} A_h \right)^{- 1} \left( Z
   \left( \bar{f}^0_h + \frac{1}{\tau} v^0_h \right) + \bar{f}^1_h \right) +\frac{f^2_h-f^0_h}{2\tau} 
\end{align*} 
After some tedious calculations,  this can be rewritten as
\begin{align}
\notag \partial^{2}_1f_h-A_h\partial^{2}_1 u_h  =  -\frac{1}{2}  \frac{Z} {\left( I +
  \frac{\tau^2}{4} A_h \right)^2}  \left(A_h^2 u^0_h - A_h f^0_h\right) &+\frac{\tau A_h
  }{\left( I + \frac{\tau^2}{4} A_h \right)^2}  \left(A_h v^0_h - \partial^1_0 f_h\right)\\
 &+\left( I + \frac{\tau^2}{4} A_h \right)^{- 1}\partial^2_1 {f}_h 
  \label{expAhd2u} 
  \end{align}
and
\begin{align}
  \label{expd2v}{\partial}^2_1 v_h = -\frac{\tau}{\left( I +
  \frac{\tau^2}{4} A_h \right)^2}  \left(A_h^2 u^0_h - A_h f^0_h\right) &+ \frac{Z}{2 \left( I + \frac{\tau^2}{4} A_h \right)}  \left(A_h v^0_h - \partial^1_0
  f_h\right) \\
 \notag &- \frac{\tau}{2 \left( I + \frac{\tau^2}{4} A_h \right)}   \partial^2_1 f_h 
\end{align}
Since $A_h$ is a symmetric positive definite operator, we have
\[ \| R (\tau^2 A_h) v_h \|_{L^2(\Omega)} \leq C \| v_h \|_{L^2(\Omega)} \]
for any $v_h \in V_h$ and any rational function $R$ with the degree of nominator
less or equal than that of the denominator and a constant $C$ depending only
on $R$. 
Similarly, using the fact $| v_h
|_{H^1(\Omega)} = (A_hv_h,v_h)^\frac{1}{2}=\left\|A_h^{{1}/{2}} v_h\right\|_{L^2(\Omega)}$ for any $v_h\in V_h$ one can observe
\[ \| \tau A_h R (\tau^2 A_h) v_h \|_{L^2(\Omega)} 
\le C \| A_h^{1/2} v_h \|_{L^2(\Omega)} 
= C | v_h |_{H^1(\Omega)} \]
for any rational function $R$ with the degree of nominator
less than that of the denominator and a constant $C$ depending only on $R$. 

Applying these estimates to (\ref{expd2v}) yields
  \begin{align*}
    \| \partial^2_1 f_h - A_h \partial^2_1 u_h \|_{L^2 (\Omega)} & \leq C
    \left( \| A_h^2 u^0_h - A_h f^0_h \|_{L^2 (\Omega)} + \left| A_h v^0_h -
    \frac{\partial f_h}{\partial t} (0) \right|_{H^1 (\Omega)} \right.\\
    & \left. + \left\| \frac{\tau A_h}{\left( I + \frac{\tau^2}{4} A_h
    \right)^2}  \left( \frac{\partial f_h}{\partial t} (0) - \partial^1_0 f_h
    \right) \right\|_{L^2 (\Omega)} + \| \partial^2_1 f_h \|_{L^2 (\Omega)}
    \right)
  \end{align*}
Since
  \begin{equation*}
    \partial^1_0 f_h = \frac{\partial f_h}{\partial t} (0) +
    \frac{1}{\tau}  \int^{\tau}_0 (\tau - s)  \frac{\partial^2 f}{\partial
    t^2} (s) ds
  \end{equation*}
we have
\begin{align*}
  \left\| \frac{\tau A_h}{\left( I + \frac{\tau^2}{4} A_h \right)^2} 
     \left( \frac{\partial f_h}{\partial t} (0) - \partial^1_0 f_h \right)
     \right\|_{L^2 (\Omega)} &\leq \max_{t \in [0, \tau]} \left\| \frac{\tau^2
     A_h}{\left( I + \frac{\tau^2}{4} A_h \right)^2} \frac{\partial^2
     f_h}{\partial t^2} (t) \right\|_{L^2 (\Omega)} \\
     &\leq C\max_{t \in [0,
     \tau]} \left\| \frac{\partial^2 f_h}{\partial t^2} (t) \right\|_{L^2
     (\Omega)} 
\end{align*} 
Noting finally that $\| \partial^2_1 f_h \|_{L^2(\Omega)}$ can be bounded by the maximum of $\left\| \cfrac{\partial^{2}f}{\partial t^{2}}(t)\right\|_{L^2(\Omega)}$ over time interval $[0,2\tau]$, we arrive at 
\begin{align*}
   \|\partial^{2}_1f_h-A_h\partial^{2}_1 u_h\|_{L^2(\Omega)}  &\leq  C \left( \left\|A_h^2 u^0_h - A_h f^0_h
   \right\|_{L^2(\Omega)}+ \left|A_h v^0_h - \frac{\partial f_h}{\partial t}(0) \right|_{H^1(\Omega)}\right.\\
   &\left. + \max_{t\in[0,2\tau]}\left\|\frac{\partial^2 f}{\partial t^2}(t)\right\|_{L^2(\Omega)}\right) 
\end{align*}

By a similar reasoning we can also bound $\left| {\partial}^2_{1} v_h \right|_{H^1(\Omega)}$ by the same quantitity as in the right-hand side of the equation above. 
For this, we take the $H^1$ norm on both sides of (\ref{expd2v}) and observe for the first term on the right hand side
\begin{align*}
\left| 
\frac{\tau}{\left( I + \frac{\tau^2}{4} A_h \right)^2}  \left(A_h^2 u^0_h - A_h f^0_h\right)
\right|_{H^1(\Omega)}
&= \left\| 
\frac{\tau A_h^{1/2}}{\left( I + \frac{\tau^2}{4} A_h \right)^2}  \left(A_h^2 u^0_h - A_h f^0_h\right)
\right\|_{L^2(\Omega)}
\\
&\le C \left\| 
 A_h^2 u^0_h - A_h f^0_h
\right\|_{L^2(\Omega)}
\end{align*}
The other terms can be treated similarly so that, skipping some details, we obtain
\begin{multline}\label{boundedness}
 \left(\left\| \partial^{2}_1f_h-A_h\partial^{2}_1 u_h \right\|_{L^2(\Omega)}^2 + \left| {\partial}^2_{1} v_h \right|_{H^1(\Omega)}^2\right)^{{1}/{2}}
\leq C \left( \left\|A_h^2 u^0_h - A_h f^0_h
   \right\|_{L^2(\Omega)}\right.\\
   \left.+  \left|A_h v^0_h - \frac{\partial f_h}{\partial t}(0) \right|_{H^1(\Omega)} 
   + \max_{t\in[0,2\tau]}\left\|\frac{\partial^2 f}{\partial t^2}(t)\right\|_{L^2(\Omega)} \right) 
\end{multline}   

We can now invoke the estimate of Lemma \ref{bound_d4u_abst} with $j=2$ and combine it with (\ref{boundedness}). This gives 
\begin{multline}\label{boundj3}
  \left(\left\| \partial^{2}_Nf_h-A_h\partial^{2}_N u_h \right\|_{L^2(\Omega)}^2 + \left| {\partial}^2_{N} v_h \right|_{H^1(\Omega)}^2\right)^{{1}/{2}}\leq  \sum_{n = 3}^N \tau \left\| {\partial}^3_n f\right\|_{L^2(\Omega)}\\
  + C \left( \left\|A_h^2 u^0_h - A_h f^0_h \right\|_{L^2(\Omega)} + \left|A_h v^0_h - \frac{\partial f_h}{\partial t}(0) \right|_{H^1(\Omega)}\right.\\ + \max_{t\in[0,\tau]}\left\|\frac{\partial^2 f}{\partial t^2}(t)\right\|_{L^2(\Omega)}
\Bigg).
\end{multline}
The first term in the right-hand side in (\ref{boundj3}) can be easily bounded by $\displaystyle\int_0^{t_N} \left\| \cfrac{\partial^{3}f}{\partial t^{3}} \right\|_{L^2(\Omega)}dt$. The remaining terms in the middle line of (\ref{boundj3}) are bounded using Lemma \ref{bound_AhPhu} and the relation $A_h\Pi_h=-P_h\Delta$ as follows
$$ 
\left\|A_h^2 u^0_h - A_h f^0_h \right\|_{L^2 (\Omega)} = \left\|A_h P_h (- \Delta u^0 -
   f^0) \right\|_{L^2(\Omega) }
    = \left\|A_h P_h \cfrac{\partial^{2}u}{\partial t^{2}} (0)\right \|_{L^2(\Omega)
   }
   \leq C \left|  \cfrac{\partial^{2}u}{\partial t^{2}} (0) \right|_{H^2(\Omega) } 
$$
and
$$
\left|A_h v^0_h -\frac{\partial f_h}{\partial t}(0) \right|_{H^1(\Omega)
   } = \left|P_h  \left(- \Delta v^0 - \frac{\partial f}{\partial t}(0)\right)
   \right|_{H^1 (\Omega)} \\
   \leq \left|P_h \cfrac{\partial^{3}u}{\partial t^{3}} (0) \right|_{H^1(\Omega) }
   \leq C
   \left| \cfrac{\partial^{3}u}{\partial t^{3}} (0) \right|_{H^1(\Omega)} 
$$
This gives (\ref{boundj4}). 
\end{proof}

\begin{remark}
Note that in Lemma \ref{bound_d4u} the approximation of the initial conditions and of the right-hand side is crucial for boundedness of higher order discrete derivatives and consequently to optimality of our time and space error estimators. We illustrate this fact with some numerical examples in Subsection \ref{unstr}.  
\end{remark}
\begin{corollary}
Let u be the solution of wave equation (\ref{wave}) and 
$\displaystyle\cfrac{\partial^{3}u}{\partial t^{3}}(0)\in {H^1(\Omega)}$, $\displaystyle\cfrac{\partial^{2}u}{\partial t^{2}}(0) \in {H^2(\Omega)}$, $\displaystyle\cfrac{\partial^{2}f}{\partial t^{2}}(t)\in L^{\infty}(0,T;{L^2(\Omega)})$, $\displaystyle\cfrac{\partial^{3}f}{\partial t^{3}}(t)\in L^{2}(0,T;{L^2(\Omega)})$. Suppose that mesh $\Th$ is quasi-uniform and the mesh in time is uniform ($t_k=k\tau$). Then, the 3-point time error estimator $\eta_T(t_k)$ defined by (\ref{in},\ref{in0step}) is of order $\tau^2$, i.e. 
\begin{equation}
\eta_T(t_k)\leq C \tau^2.\label{upperOpt}
\end{equation}
with a positive constant $C$ depending only on $u$, $f$, and the mesh regularity.
\end{corollary}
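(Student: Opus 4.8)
The plan is to split $\eta_T(t_k)$ into a purely temporal prefactor of size $O(\tau^2)$ and a second factor assembled from the discrete quantities $|\partial_k^2 v_h|_{H^1(\Omega)}$ and $\|\partial_k^2 f_h - z_h^k\|_{L^2(\Omega)}$, and then to invoke Lemma \ref{bound_d4u} to show that this second factor remains bounded uniformly in $h$, $\tau$ and $k$. Optimality in time then reduces to the already-established higher-order discrete regularity.

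First I would note that, comparing the definition (\ref{zh}) of $z_h^k$ with the definition (\ref{Ah}) of $A_h$, one has $z_h^k = A_h\partial_k^2 u_h$, so that $\partial_k^2 f_h - z_h^k = \partial_k^2 f_h - A_h\partial_k^2 u_h$. Hence the second factor in (\ref{in}) (respectively (\ref{in0step})) is built precisely from the two quantities bounded on the left-hand side of (\ref{boundj4}). Working on a quasi-uniform mesh with the initial conditions discretized by the $H^1_0$-orthogonal projection as in (\ref{initialcond}), Lemma \ref{bound_d4u} applies: taking $N=k$ for $k\geq 1$, and $N=1$ for the terms $\partial_1^2 v_h$ and $\partial_1^2 f_h - z_h^1$ that enter $\eta_T(t_0)$, we obtain both $|\partial_k^2 v_h|_{H^1(\Omega)}\leq C$ and $\|\partial_k^2 f_h - z_h^k\|_{L^2(\Omega)}\leq C$ with $C$ independent of $h$, $\tau$ and $k$.

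Next I would verify that the right-hand side of (\ref{boundj4}) is finite and bounded independently of $\tau$ and $N$ under the regularity hypotheses of the corollary. The terms $\left|\frac{\partial^3 u}{\partial t^3}(0)\right|_{H^1(\Omega)}$ and $\left|\frac{\partial^2 u}{\partial t^2}(0)\right|_{H^2(\Omega)}$ are finite by assumption; the term $\max_{t\in[0,2\tau]}\left\|\frac{\partial^2 f}{\partial t^2}(t)\right\|_{L^2(\Omega)}$ is bounded by $\left\|\frac{\partial^2 f}{\partial t^2}\right\|_{L^\infty(0,T;L^2(\Omega))}$; and the term $\int_0^{t_N}\left\|\frac{\partial^3 f}{\partial t^3}\right\|_{L^2(\Omega)}dt$ is bounded by $\sqrt{T}\left\|\frac{\partial^3 f}{\partial t^3}\right\|_{L^2(0,T;L^2(\Omega))}$ through the Cauchy--Schwarz inequality. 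Consequently the constant $C$ above depends only on $u$, $f$ and the mesh regularity, and the second factor $\left(|\partial_k^2 v_h|_{H^1(\Omega)} + \|\partial_k^2 f_h - z_h^k\|_{L^2(\Omega)}^2\right)^{1/2}$ is bounded by a constant of the same type for every $k$.

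Finally I would evaluate the temporal prefactor on the uniform grid $t_k=k\tau$: for $k\geq 1$ it equals $\left(\frac{1}{12}+\frac{1}{8}\right)\tau^2=\frac{5}{24}\tau^2$, and for $k=0$ it equals $\left(\frac{5}{12}+\frac{1}{2}\right)\tau^2=\frac{11}{12}\tau^2$, both of order $\tau^2$. Multiplying by the uniform bound on the second factor yields (\ref{upperOpt}). I do not expect a genuine obstacle here, since the statement is essentially a repackaging of Lemma \ref{bound_d4u}; the only points demanding care are the identification $z_h^k=A_h\partial_k^2 u_h$ and the observation that the two norms enter the second factor with mismatched powers, which is harmless because each is individually $O(1)$ and therefore so is their combination.
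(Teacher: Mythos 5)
Your proposal is correct and takes essentially the same route as the paper, whose entire proof is the remark that the corollary ``follows immediately from Lemma \ref{bound_d4u}'': you have merely made explicit the details the paper leaves implicit, namely the identification $z_h^k=A_h\partial_k^2 u_h$ via (\ref{zh}) and (\ref{Ah}), the boundedness of the right-hand side of (\ref{boundj4}) under the stated regularity of $u$ and $f$, and the evaluation of the temporal prefactors $\tfrac{5}{24}\tau^2$ and $\tfrac{11}{12}\tau^2$ on the uniform grid. Your observations that the discretization (\ref{initialcond}) of the initial data must be assumed for Lemma \ref{bound_d4u} to apply, and that the mismatched powers in (\ref{in}) are harmless since each factor is individually $O(1)$, are both apt and consistent with the paper's intent.
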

\begin{proof}
Follows immediately from Lemma \ref{bound_d4u}. 
\end{proof}
\section{Numerical results}\label{section4}
\subsection{A toy model: a second order ordinary differential equation}

Let us consider first the following ordinary differential equation
\begin{equation}
\begin{cases}
\cfrac{d^{2}u(t)}{dt^{2}}+Au(t)=f(t) ,&t\in\left[ 0;T\right]\\
u(0)=u_0 ,&\\
\cfrac{du}{dt}(0)=v_0&
\end{cases}
\label{ODE}
\end{equation}
with a constant $A>0$. This problem serves as simplification of the wave equation in which we get rid of the space variable. The Newmark scheme reduces in this case to
\begin{align}
\frac{u^{n+1}-u^{n}}{\tau_n}-\frac{u^{n}-u^{n-1}}{\tau_{n-1}}&+A\frac{\tau_n(u^{n+1}+u^{n})+\tau_{n-1}(u^{n}+u^{n-1})}{4}=
\notag\\
&=\frac{\tau_n (f^{n+1}+f^n)+\tau_{n-1}(f^n+f^{n-1})}{4},~1\leq n\leq N-1
\label{schN}\\
\frac{u^1-u^0}{\tau_0}&=v_0-\frac{\tau_0}{4}A(u^1+u^0)+\frac{\tau_0}{4}(f^1+f^0),
\notag\\
u^0&=u_0\notag
\end{align}
the error becomes $e=\displaystyle\max_{0 \leq n \leq N}\left( \left\vert v^{n}-{u}'(t_{n})\right\vert ^{2}+A\left\vert u^{n}-u(t_{n})\right\vert ^{2}\right) ^{{1}/{2}}$, and the 3-point a posteriori error estimate $\forall n:~0\leq n \leq N$ simplifies to this form:
\begin{align}\label{errest}
e \leq \sum_{k=0}^{n-1}\tau_k\eta_{T}(t_k)&= \tau_0 \left(\frac{5}{12}\tau_{0}^2+\frac{1}{2}\tau_{0}\tau_{1}\right) \sqrt{A(\partial_1^2 v)^2 +  (\partial_1^2f-A\partial_1^2 u)^2}\\
\notag   &+\sum_{k=1}^{n-1}\tau_k \left(\frac{1}{12}\tau_{k}^2+\frac{1}{8}\tau_{k-1}\tau_{k}\right) \sqrt{A(\partial_k^2 v)^2 +  (\partial_k^2f-A\partial_k^2 u)^2}.
\end{align}

We define the following effectivity index  in order to measure the quality of our estimators $\eta_T$:
\begin{equation*}
ei_T=\frac{\eta_T}{e}.
\end{equation*}
We present in Table \ref{tab:ode} the results for equation (\ref{ODE}) setting $f=0$, the exact solution $u=cos(\sqrt{A}t)$, final time $T=1$, and using constant time steps $\tau=\displaystyle {T}/{N}$. We observe that 3-point estimator is divided by about 100 when the time step $\tau$ is divided by 10. The true error $e$ also behaves as $O(\tau^2)$ and hence the time error estimator behaves as the true error.

\begin{table}[t!]
\tblcaption{Effective indices for constant time steps and $f=0$.}
{
\begin{tabular}{@{}ccccc@{}}
\tblhead{$A$ & $N$ & $\eta_{T}$ & $e$ & $ei_{T}$ }
\noalign{\vskip 2mm} 
100\phzz & \phzz100 & 0.21~\phzzz & 0.085\phzzz & 2.47\\
100\phzz & \phz1000 & 0.0021\phzz & 8.34e-04    & 2.5\phz\\
100\phzz & 10000    & 2.08e-05    & 8.35e-06    & 2.5\phz\\
\noalign{\vskip 2mm} 
1000\phz & \phzz100 & 20.51\phzzz & 8.35~\phzzz & 2.46\\
1000\phz & \phz1000 & 0.209\phzzz & 0.084\phzzz & 2.5\phz\\
1000\phz & 10000    & 0.0021\phzz & 8.33e-04    & 2.5\phz\\
\noalign{\vskip 2mm} 
10000    & \phzz100 & 1.68e+03    & 200~~\phzzz & 8.38\\
10000    & \phz1000 & 20.8~\phzzz & 8.34~\phzzz & 2.5\phz\\
10000    & 10000    & 0.208\phzzz & 0.083\phzzz & 2.5\phz\\
\lastline
\end{tabular}
}
\label{tab:ode}
\end{table}

In order to check behaviour of time error estimator for variable time step (see Table \ref{tab:ode2}) we take the previous example with time step $\forall n:~0\leq n \leq N$
\begin{equation}\label{tau10}
\tau_n=\begin{cases}
0.1\tau_{\ast} ,&mod(n,2)=0\\
\tau_{\ast} ,&mod(n,2)=1
\end{cases}
\end{equation}
where $\tau_{\ast}$ is a given fixed value.  As in the case of constant time step we have the equivalence between the true error and the estimated error. We have plotted on Fig. \ref{fig:toyIndicators} evolution in time of the value $\sum_{k=0}^{n-1}{\eta}_{T}(t_k)$ compared to $e$. 

The same conclusions hold when using even more non-uniform time step $\forall n:~0\leq n \leq N$
\begin{equation}\label{tau100}
\tau_n=\begin{cases}
0.01\tau_{\ast} ,&mod(n,2)=0\\
\tau_{\ast} ,&mod(n,2)=1
\end{cases}
\end{equation}
on otherwise the same test case (see Table {\ref{tab:ode3}}).

Our conclusion is thus that for toy model classic and alternative a posteriori error estimators are sharp on both constant and variable time grids. 
\begin{figure}[h!]
    \centering
    \includegraphics[width=.85\textwidth]{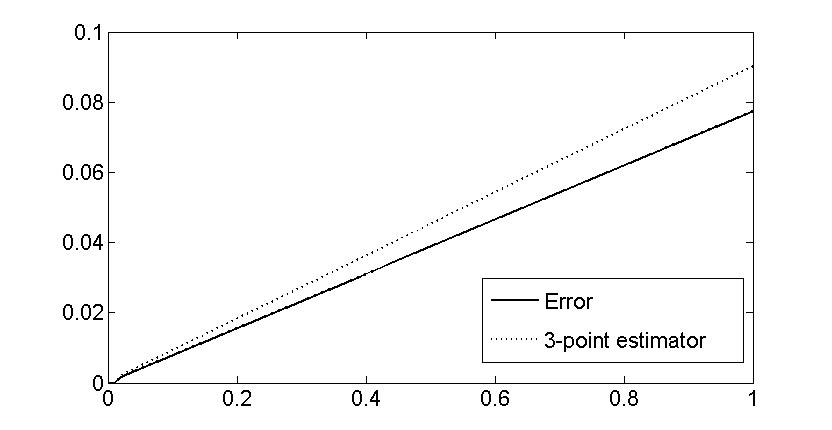}
    \caption{Evolution in time of true error and 3-point error estimate for variable time step (\ref{tau10}), $A=100$, $N=180$, $T=1$}
    \label{fig:toyIndicators}
\end{figure} 

\begin{table}[t!]
\tblcaption{Effective indices for variable time step {\rm(\ref{tau10})} and $f=0$.}
{
\begin{tabular}{@{}ccccc@{}}
\tblhead{$A$ & $N$ & $\eta_{T}$ & $e$ & $ei_{T}$ }
\noalign{\vskip 2mm} 
100\phzz & \phzz180 & 0.09~\phzzz & 0.077\phzzz & 1.17 \\
100\phzz & \phz1816 & 8.85e-04    & 7.59e-04    & 1.17 \\
100\phzz & 18180    & 8.83e-06    & 7.6e-06\phz & 1.16 \\
\noalign{\vskip 2mm} 
1000\phz & \phzz180 & 8.91~\phzzz & 7.6~~\phzzz & 1.17 \\
1000\phz & \phz1816 & 0.089\phzzz & 0.076\phzz  & 1.17 \\
1000\phz & 18180    & 8.84e-04    & 7.59e-04    & 1.16 \\
\noalign{\vskip 2mm} 
10000    & \phzz180 & 802.84\phzz & 200~~\phzzz & 4.01 \\
10000    & \phz1816 & 8.84~\phzzz & 7.58~\phzzz & 1.17 \\
10000    & 18180    & 0.088\phzzz & 0.076\phzzz & 1.16 \\
\lastline
\end{tabular}
}
\label{tab:ode2}
\end{table}

\begin{table}[t!]
\tblcaption{Effective indices for variable time step {\rm(\ref{tau100})} and $f=0$.}
{
\begin{tabular}{@{}ccccc@{}}
\tblhead{$A$ & $N$ & $\eta_{T}$ & $e$ & $ei_{T}$ }
\noalign{\vskip 2mm} 
100\phzz & \phzz196 & 0.086\phzzz & 0.084\phzzz & 1.02 \\
100\phzz & \phz1978 & 8.39e-04    & 8.26e-04    & 1.02 \\
100\phzz & 19800    & 8.38e-06    & 8.1e-06\phz & 1.03 \\
\noalign{\vskip 2mm} 
1000\phz & \phzz196 & 8.47~\phzzz & 8.26~\phzzz & 1.02 \\
1000\phz & \phz1978 & 0.083\phzzz & 0.0827\phzz & 1.02 \\
1000\phz & 19800    & 8.37e-04    & 8.26e-04    & 1.01 \\
\noalign{\vskip 2mm} 
10000    & \phzz196 & 764.2\phzzz & 200~~\phzzz & 3.82 \\
10000    & \phz1978 & 8.39~\phzzz & 8.25~\phzzz & 1.02 \\
10000    & 19800    & 0.084\phzzz & 0.083\phzzz & 1.01 \\
\lastline
\end{tabular}
}
\label{tab:ode3}
\end{table}

\subsection{The error estimator for the wave equation on structured mesh}
We now report numerical results for initial boundary-value problem for wave equation with uniform time steps when using 3-point time error estimator (\ref{in}, \ref{in0step}). We compute space estimators (\ref{space1}) and (\ref{space2}) in practice as follows:
\begin{align}
 \label{etas1} \eta_S^{(1)} (t_N) &= \max_{1 \leq n \leq N-1} \left[ \sum_{K \in \mathcal{T}_h}
   h_K^2  \left\Vert  \partial_n v_h-{f}^n_{h}
   \right\Vert_{L^2(K)}^2\right. +\left.\sum_{
E \in \mathcal{E}_h}h_{E} \|[n \cdot \nabla {u}^n_{h}]\|_{L^2(E)}^2 \right]^{1/2},
\\
\label{etas2}  \eta_S^{(2)} (t_N)  &= \sum_{n = 1}^{N-1} \tau_n \left[ \sum_{K \in \mathcal{T}_h} h_K^2
  \left\Vert  \partial_n^2 v_h - \partial_n f_h\right\Vert_{L^2(K)}^2 + \sum_{
E \in \mathcal{E}_h}h_{E} \left\Vert\left[n \cdot
  \nabla  \partial_n u_h\right]\right\Vert_{L^2(E)}^2
  \right]^{1/2}.
\end{align}
The quality of our error estimators in space and time is determined by following effectivity index:
\begin{equation*}
ei=\frac{\eta_T+\eta_S}{e}.
\end{equation*}
The true error is
\begin{equation*}
e=\max_{0 \leqslant n \leqslant N}\left(\left\Vert v^{n}_h-\cfrac{\partial u}{\partial t}(t_{n})\right\Vert_{L^2(\Omega)} ^{2}+\left\vert u^{n}_h-u(t_{n})\right\vert^{2}_{H^1(\Omega)}\right) ^{{1}/{2}}.
\end{equation*}
Consider the problem (\ref{wave}) with $\Omega=(0,1)\times(0,1),~ T=1$ and the exact solution $u$ given by
\begin{align*}
\text{case (a)}~~~ & u(x,y,t)=\cos(\pi t)\sin(\pi x)\sin(\pi y),\\
\text{case (b)}~~~ & u(x,y,t)=\cos(0.5\pi t)\sin(10\pi x)\sin(10\pi y),\\
\text{case (c)}~~~ & u(x,y,t)=\cos(15\pi t)\sin(\pi x)\sin(\pi y)
\end{align*}
We interpolate the initial conditions and the right-hand side with nodal interpolation. Structured  meshes in space (see Fig. \ref{figura1}) are used in all the experiments of this section. Numerical results are reported in Tables \ref{tab:wave1}--\ref{tab:wave3}. Note that these  cases and the meshes in space in time in the following numerical experiments are chosen so that the error in case (a) should be due to both time and space discretization, that in case (b) comes mainly from the space discretization, and that in case (c) mainly from the time discretization.

\begin{table}[t!]
\tblcaption{Results for  case (a). The quantity $N_0$ is defined in {\rm(\ref{N0})} and provided here for future reference.}
{
\begin{tabular}{@{}ccccccccc@{}}
\tblhead{$h$ & $\tau$ & $ei$ & $\eta_T$ & $\eta_S$ &$\eta^{(1)}_S$ &$\eta^{(2)}_S$ & $N_0$ & e }
\noalign{\vskip 2mm} 
\nicefrac{1}{160} & $\sqrt{h}$ & 13.74 & 0.114\phzz & 0.37\phz & 0.12\phz & 0.24\phz & 97.79 & 0.035\phz \\
\nicefrac{1}{320}  & $\sqrt{h}$ & 13.58 & 0.054\phzz  & 0.18\phz & 0.061 & 0.12\phz & 97.59 & 0.017\phz \\
\nicefrac{1}{640} & $\sqrt{h}$ & 13.42 & 0.026\phzz & 0.092 & 0.031 & 0.062 & 97.5\phz & 0.0088\\
\noalign{\vskip 2mm} 
\nicefrac{1}{160} & ${h}$ & 16.98 & 0.00062 & 0.37\phz & 0.12\phz & 0.24\phz & 97.79 & 0.021\phz \\
\nicefrac{1}{320} & ${h}$ & 16.97 & 0.00015 & 0.18\phz & 0.062 & 0.12\phz & 97.59 & 0.011\phz \\
\nicefrac{1}{640} & ${h}$ & 16.97 & 3.82e-05 & 0.092 & 0.031 & 0.062 & 97.5\phz & 0.005\phz\\
\lastline
\end{tabular}
}
\label{tab:wave1}
\end{table} 

\begin{table}[t!]
\tblcaption{Results for  case (b).}
{
\begin{tabular}{@{}cccccccc@{}}
\tblhead{$h$ & $\tau$ & $ei$ & $\eta_T$ & $\eta_S$ &$\eta^{(1)}_S$ &$\eta^{(2)}_S$ & e }
\noalign{\vskip 2mm} 
\nicefrac{1}{320} & \nicefrac{1}{20} & 13.05 & 2.03\phz & 12.15 & 6.13 & 6.02 & 1.09\\
\nicefrac{1}{320} & \nicefrac{1}{40} & 12.11 & 0.92\phz & 12.27 & 6.15 & 6.11 & 1.09\\
\nicefrac{1}{320} & \nicefrac{1}{80} & 11.62 & 0.37\phz & 12.29 & 6.16 & 6.13 & 1.09\\
\noalign{\vskip 2mm} 
\nicefrac{1}{640} & \nicefrac{1}{20} & 12.14 & 0.51\phz  & 6.09 & 3.07 & 3.02 & 0.54\\
\nicefrac{1}{640} & \nicefrac{1}{40} & 11.68 & 0.23\phz  & 6.13 & 3.08 & 3.05 & 0.54\\
\nicefrac{1}{640} & \nicefrac{1}{80} & 11.64 & 0.096 & 6.15 & 3.08 & 3.07 & 0.54\\
\lastline
\end{tabular}
}
\label{tab:wave2}
\end{table}

\begin{table}[t!]
\tblcaption{Results for case (c).}
{
\begin{tabular}{@{}cccccccc@{}}
\tblhead{$h$ & $\tau$ & $ei$ & $\eta_T$ & $\eta_S$ &$\eta^{(1)}_S$ &$\eta^{(2)}_S$ & e }
\noalign{\vskip 2mm}
\nicefrac{1}{160} & \nicefrac{1}{80} & 73.98 & 55.92 & 4.17 & 0.75\phz & 3.41 & 0.81 \\
\nicefrac{1}{320} & \nicefrac{1}{80} & 71.42 & 55.92 & 2.08 & 0.38\phz & 1.71 & 0.81\\
\nicefrac{1}{640} & \nicefrac{1}{80} & 70.13 & 55.93 & 1.04 & 0.19\phz & 0.85 & 0.81\\
\noalign{\vskip 2mm}
\nicefrac{1}{160} & \nicefrac{1}{160} & 87.44 & 14.15 & 3.78 & 0.15\phz & 3.63 & 0.21\\
\nicefrac{1}{320} & \nicefrac{1}{160} & 78.22 & 14.15 & 1.89 & 0.076 & 1.82 & 0.21\\
\nicefrac{1}{640} & \nicefrac{1}{160} & 73.61 & 14.15 & 0.95 & 0.038 & 0.91 & 0.21\\
\lastline
\end{tabular}
}
\label{tab:wave3}
\end{table}

Referring to Table \ref{tab:wave1}, we observe from first three rows that setting $h = \tau^2$ the error is divided by 2 each time $h$ is divided by 2, consistent with $e\sim O(\tau^2+h)$. The space error estimator and the time error estimator behave similarly and thus provide a good representation of the true error. The effectivity index tends to a constant value. In rows 4-6, we choose $h = \tau$ in order to insure that the discretization in time gives an error of higher order than that in space, i.e. $O(h^2)$ vs. $O(h)$, respectively. Our estimators capture well this behaviour of the two parts of the error.

In Table \ref{tab:wave2}, in order to illustrate the sharpness of the space estimator,  we take case (b) where the error is mainly due to the space discretization. We can see from this table that the space error estimator $\eta_{S}$ behaves as the true error. Indeed, for a given space step, $\eta_{S}$ does not depend on the time step $\tau$, and for constant $\tau$, $\eta_{S}$ is divided by two when the space step $h$ is divided by two.

Finally, we consider case (c), Table \ref{tab:wave3}. We observe that the time error estimator $\eta_{T}$ behaves as the true error, when the error is mainly due to the time discretization.

We therefore conclude that our time and space error estimators are sharp in the regime of constant time steps and structured space meshes. They separate well the two sources of the error and can be thus used for the mesh adaptation in space and time.
\begin{remark}
As said already, the space estimator $\eta_S$ behaves as $O(h)$ in  the numerical experiments reported in Tables \ref{tab:wave1}-\ref{tab:wave2}. The situation is slightly different in Table \ref{tab:wave3}. Indeed, the first part of space error estimator $\eta^{(1)}_S$ behaves here as $O(\tau^2h)$. This can be explained by the fact that, as seen from the definitions (\ref{etas1})--(\ref{etas2}), both $\eta_S^{(1)}$ and $\eta_S^{(2)}$ are also influenced by discretization in time. In general, in the leading order in $h$ and $\tau$, one can conjecture $\eta_S^{(1,2)}=Ah+Bh\tau^2$ with case dependent $A$ and $B$. The second term $Bh\tau^2$ is asymptotically negligible but it can become visible in some situations where the solution is highly oscillating in time and the mesh in time is not sufficiently refined, as indeed observed with $\eta^{(1)}_S$ in Table \ref{tab:wave3}. Fortunately, its value is small compared to the time error estimator and thus we can hope that this effect is not essential for mesh refinement.
\end{remark}

\subsection{The error estimator for the wave equation on unstructured mesh}\label{unstr}

\begin{figure}[h]
\noindent\centering{
$\begin{matrix}
~~\includegraphics[width=50mm]{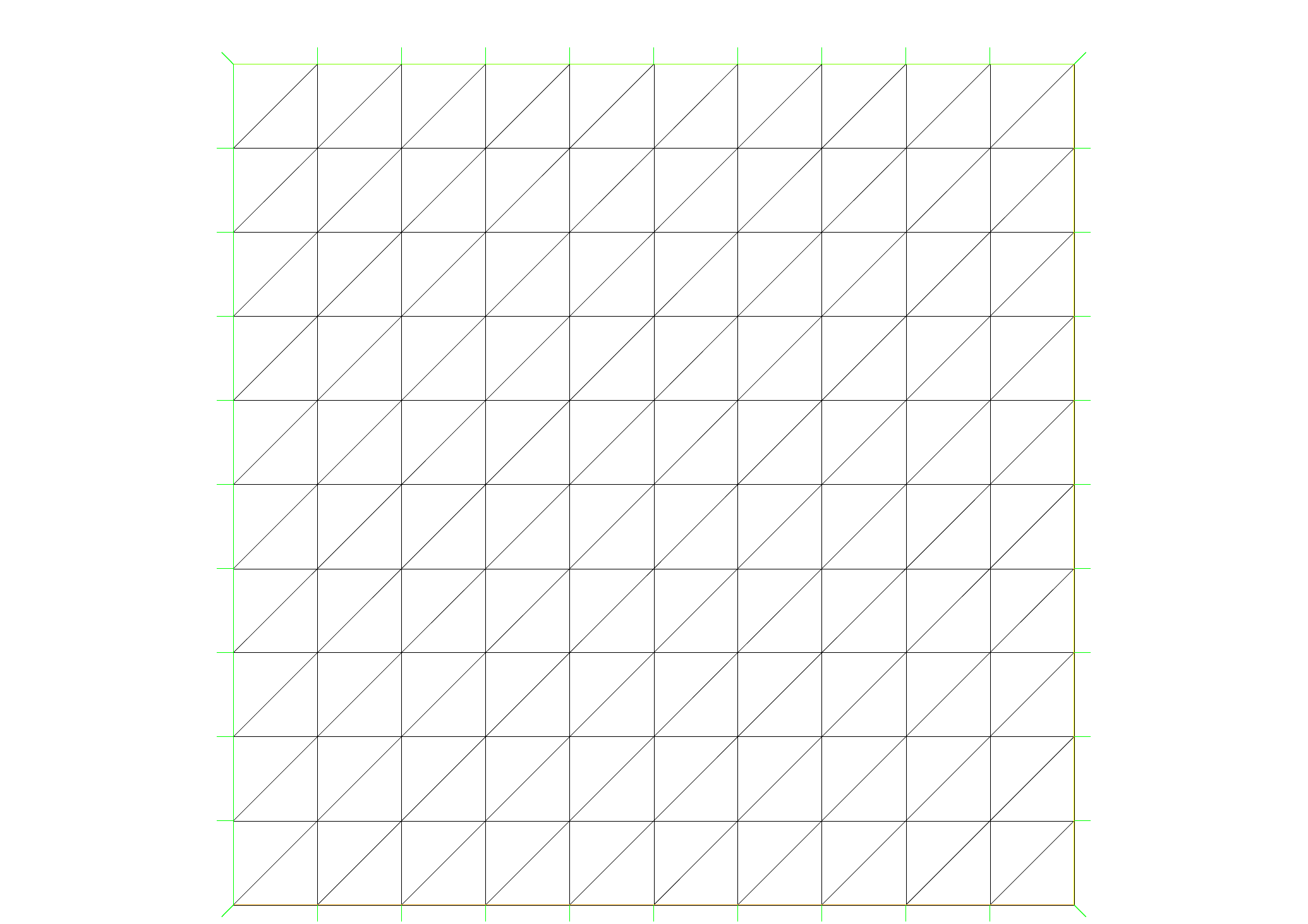}~~&~~\includegraphics[width=50mm]{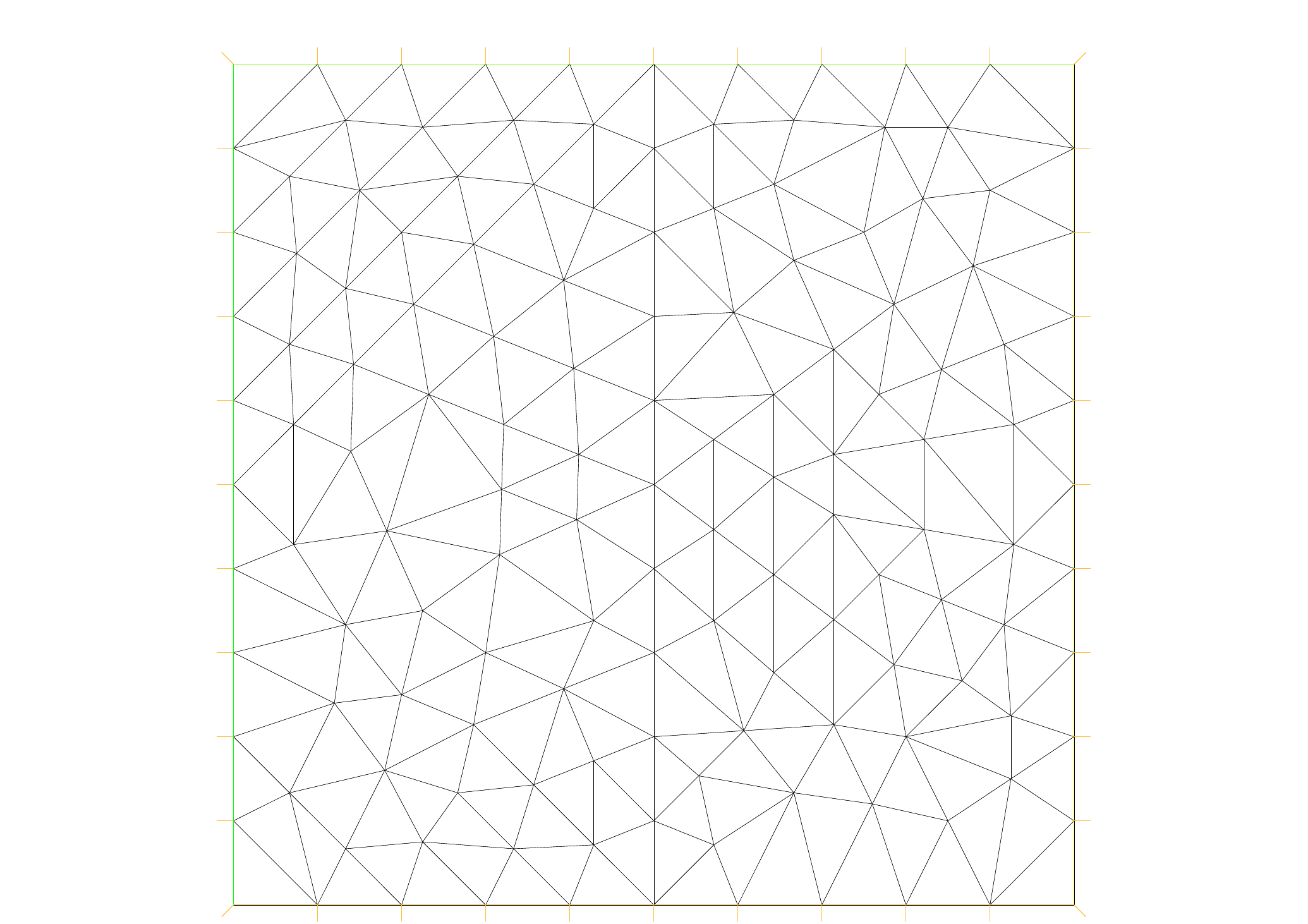}
\end{matrix}$
}
\caption{Structured (on the left) and unstructured (on the right) a $10\times10$ meshes of the unit square.}
\label{figura1}
\end{figure}

We turn now to the numerical experiments on unstructured Delaunay meshes, cf.  Fig. \ref{figura1} (right). These experiments will reveal the dependence of the error estimators on approximation of initial conditions and of the right-hand side $f$. Indeed, as noted in Subsection \ref{optimality}, these approximations should be chosen  carefully  to ensure the optimality of our error estimators.

\begin{figure}[h]
\noindent\centering{
$\begin{matrix}
~~\includegraphics[width=52mm]{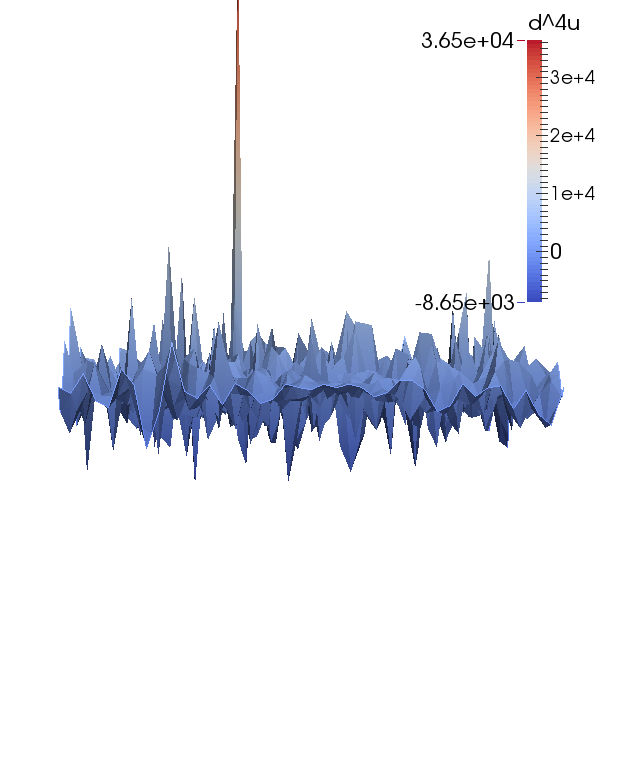}&\includegraphics[width=50mm]{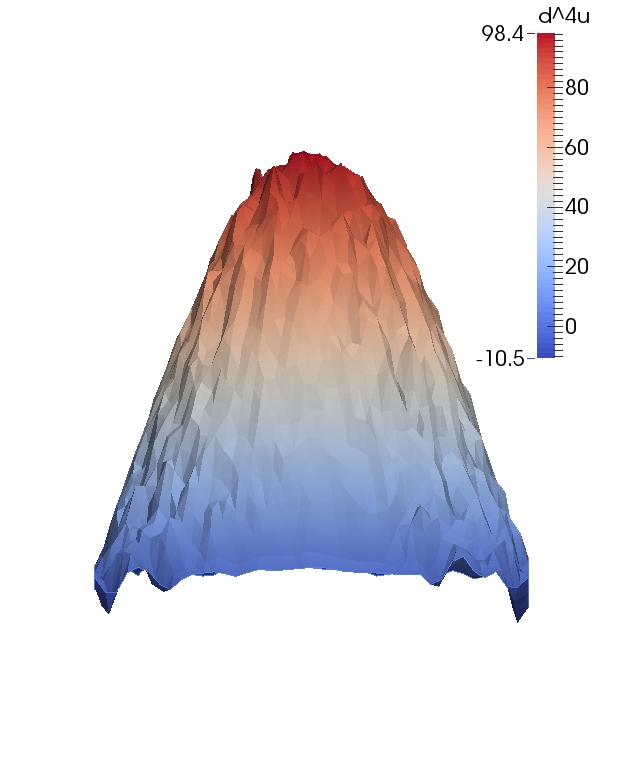}
\end{matrix}$
}
\caption{${\partial}_4^4 u_h$ for different discretization of the initial conditions: on the left (see Table \ref{tab:wave5}) we take $u^0_h$  as the nodal interpolation of $u_0$ while on the right (see Table \ref{tab:wave6}) $u^0_h=\Pi_h u_0$ , $h=0.125$, $\tau= 0.025$}
\label{figura2}
\end{figure}

\begin{table}[t!]
\tblcaption{Results for  case (a), constant time steps, unstructured Delaunay meshes, nodal interpolation of the initial conditions and $f$ as in {\rm(\ref{NodalInit})}.}
{
\begin{tabular}{@{}ccccccccc@{}}
\tblhead{$h$ & $\tau$ & $ei$ & $\eta_T$ & $\eta_S$ &$\eta^{(1)}_S$ &$\eta^{(2)}_S$& $N_0$ & e}
\noalign{\vskip 2mm} 
\nicefrac{1}{160} & $\sqrt{h}$ & 75\phzzz  & 2.1\phz & 0.33 & 0.094 & 0.23\phz & 934718\phz & 0.033\phz \\
\nicefrac{1}{320}  & $\sqrt{h}$ & 120.74 & 1.76 & 0.17 & 0.047 & 0.13\phz & 3.31e+06 & 0.016\phz \\
\nicefrac{1}{640} & $\sqrt{h}$ & 244.56 & 1.89 & 0.11 & 0.023 & 0.082 & 1.44e+07 & 0.0082\\
\noalign{\vskip 2mm} 
\nicefrac{1}{160} & ${h}$ & 196.92 & 1.61 & 1.73 & 0.096 & 1.63\phz & 934718\phz & 0.017\phz \\
\nicefrac{1}{320} & ${h}$ & 353.63 & 1.43 & 1.49 & 0.047 & 1.45\phz & 3.31e+06& 0.088\phz \\
\nicefrac{1}{640} & ${h}$ & 751.43 & 1.54 & 1.59 & 0.023 & 1.56\phz & 1.44e+07 & 0.0042\\
\lastline
\end{tabular}
}
\label{tab:wave5}
\end{table}

\begin{table}
\tblcaption{Results for  case (a), constant time steps, unstructured Delaunay meshes, orthogonal projection of the initial conditions and $f$ as in {\rm(\ref{Projinit})}.}
{
\begin{tabular}{@{}ccccccccc@{}}
\tblhead{$h$ & $\tau$ & $ei$ & $\eta_T$ & $\eta_S$ &$\eta^{(1)}_S$ &$\eta^{(2)}_S$& $N_0$ & e}
\noalign{\vskip 2mm}
\nicefrac{1}{160} & $\sqrt{h}$ & 12.29  & 0.115\phzz & 0.28\phz & 0.094 & 0.19\phz & 98.48 & 0.032\phz \\
\nicefrac{1}{320}  & $\sqrt{h}$ & 12.13 & 0.054\phzz & 0.14\phz & 0.047 & 0.094 & 98.18 & 0.016\phz \\
\nicefrac{1}{640} & $\sqrt{h}$ & 12.\phzz & 0.027\phzz & 0.071 & 0.024 & 0.047 & 98.27 & 0.0081\\
\noalign{\vskip 2mm}
\nicefrac{1}{160} & ${h}$ & 17.4\phz & 0.00062 & 0.29\phz & 0.095 & 0.19\phz & 98.48 & 0.017\phz \\
\nicefrac{1}{320} & ${h}$ & 17.25 & 0.00015 & 0.14\phz & 0.047 & 0.094 & 98.18 & 0.082\phz \\
\nicefrac{1}{640} & ${h}$ & 17.28 & 3.83e-05 & 0.071 & 0.023 & 0.047 & 98.27& 0.0041\\
\hline
\hline
\end{tabular}
}
\label{tab:wave6}
\end{table}

We consider the test case from the previous subsection with the exact solution $u$ given by case (a).  We test two different ways to approximate the initial conditions and the right-hand side:  nodal interpolation 
\begin{equation}\label{NodalInit}
u^0_h = I_h u^0,~v^0_h = I_h v^0,~f^n_h= I_h f^n,~0\leq n \leq N
\end{equation}
and orthogonal projections as in Lemma \ref{bound_d4u}
\begin{equation}\label{Projinit}
u^0_h = \Pi_h u^0,~v^0_h = \Pi_h v^0,~f^n_h= P_h f^n,~0\leq n \leq N.
\end{equation}
The results are reported in Tables \ref{tab:wave5} and \ref{tab:wave6}. 
The meshes, the time steps and other details of the numerical algorithm,  are exactly the same in these two tables. We observe that the errors are very similar as well and conclude therefore that the accuracy of the method does not depend on the manner in which the initial conditions and $f$ are approximated, either (\ref{NodalInit}) or (\ref{Projinit}). 

On the contrary, the behaviour of error estimators is quite different in the two cases. From Table \ref{tab:wave5} (nodal interpolation), we see that the time error estimator $\eta_T$ blows up with mesh refinement, while the second part of the space estimator $\eta^{(2)}_{S}$ behaves (non optimally) like $O(\tau+h)$.  Only the first part of the space estimator $\eta^{(1)}_{S}$ behaves as the true error. Such a strange behaviour of our estimators indicates the unboundedness of higher order discrete derivatives in time.  Indeed, the estimators $\eta_T$ and $\eta^{(2)}_{S}$ contain high order discrete derivatives ${\partial}^2_n f_h-A_h{\partial}^2_n u_h$ and ${\partial}^2_n v_h$ respectively. These error estimators can be of the optimal order only if all these derivatives are uniformly bounded. We recall that this property was examined in Lemma \ref{bound_d4u} and its proof hinges on the boundedness of 
\begin{equation}\label{N0}
N_0=\left\Vert A^2_hu^0_h-A_hf^0_h\right\Vert_{L^2(\Omega)}.
\end{equation}
However, as reported in Table \ref{tab:wave5}, $N_0$ also blows up under the nodal interpolation of initial conditions and of the right-hand side. This is not surprising given that the boundedness of $N_0$ in Lemma  \ref{bound_d4u} is a consequence of Lemma \ref{bound_AhPhu} and thus it is not guaranteed if one replaces projections (\ref{Projinit}) by nodal interpolation (\ref{NodalInit}). On the other hand, the results in Table \ref{tab:wave6} corresponding to interpolation by projection (\ref{Projinit}) confirm the order $O(\tau^2+h)$ for our error estimators, consistently with the theory developed in Lemmas  \ref{bound_d4u} and \ref{bound_AhPhu}.

\begin{table}[t!]
\tblcaption{Results for  case (a), constant time step, unstructured Delaunay mesh, orthogonal projection of the initial conditions and $f$ as in {\rm(\ref{Projinit})}, $M_1=\Vert A_hP_hu^0\Vert_{L^2(\Omega)}$, $M_2=\Vert P_hu^0\Vert_{H^1(\Omega)}$.}
{
\begin{tabular}{@{}ccccccc@{}}
\tblhead{Mesh & $\tau$ & $ei$ & $M_1$ & $M_2$  & $N_0$ & e}
\noalign{\vskip 2mm}
case (1) & \nicefrac{1}{10} & 17.15 & 10.39& 2.25 & 102.59 & 0.37\phz \\
case (2) & \nicefrac{1}{20} & 17.15 & 9.99 & 2.22 & 98.62\phz & 0.099 \\
case (3)  & \nicefrac{1}{40} & 17.15 & 9.97 & 2.22 & 98.45\phz & 0.025\\
\lastline
\end{tabular}
}
\label{tab:wave7} 
\end{table}

\begin{figure}[h]
\noindent\centering{
$\begin{matrix}
~~\includegraphics[width=40mm]{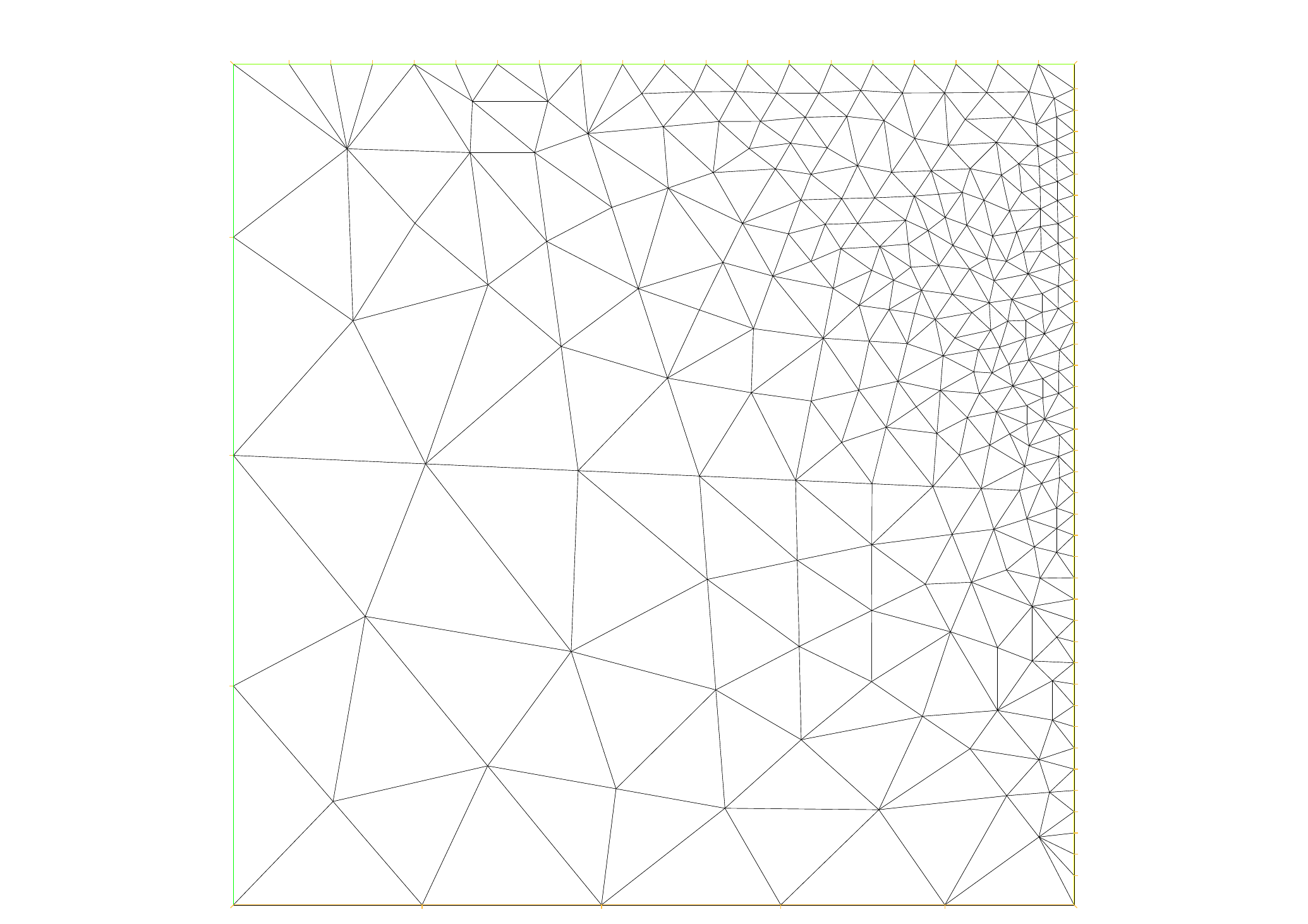}&\includegraphics[width=40mm]{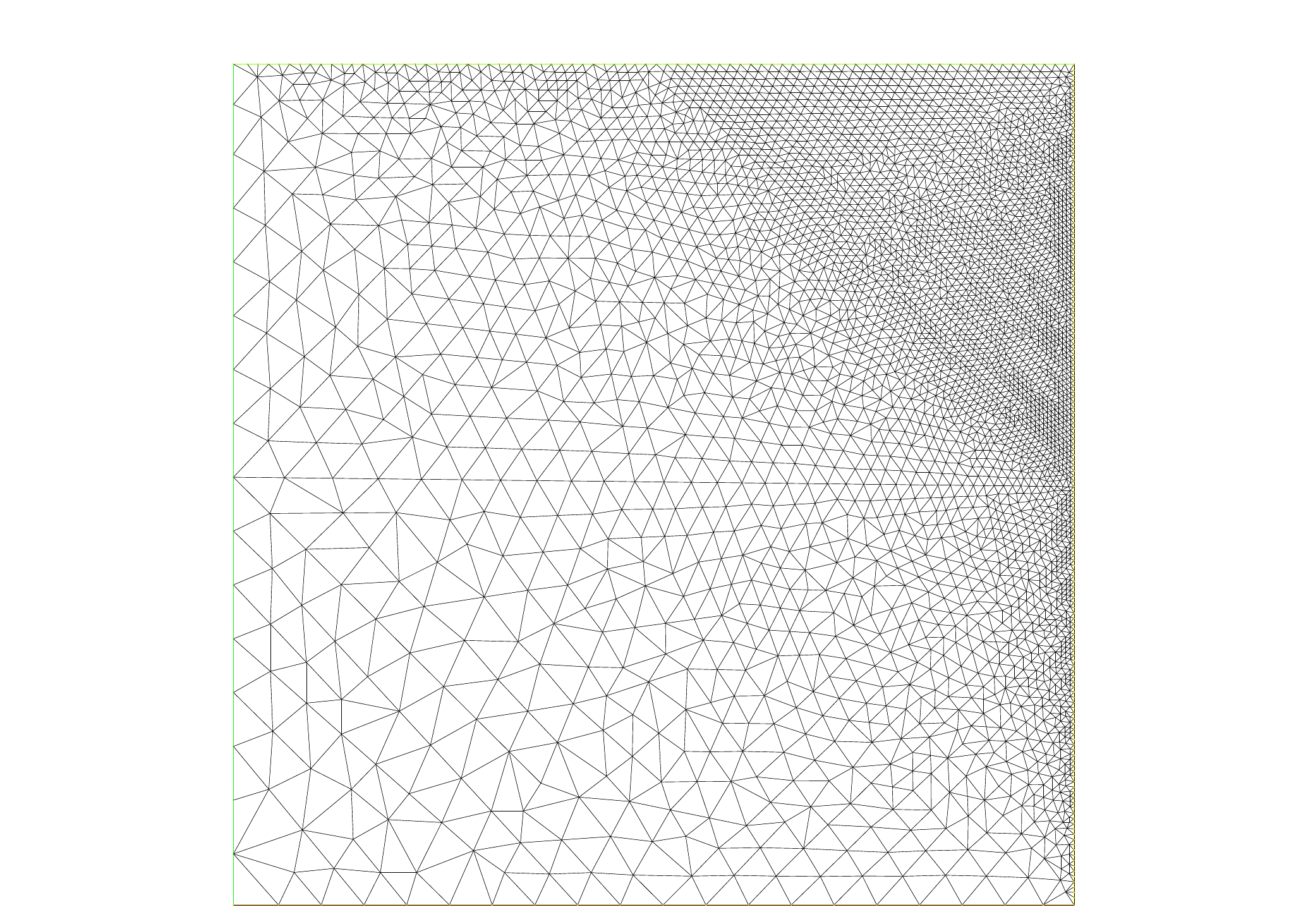}&\includegraphics[width=40mm]{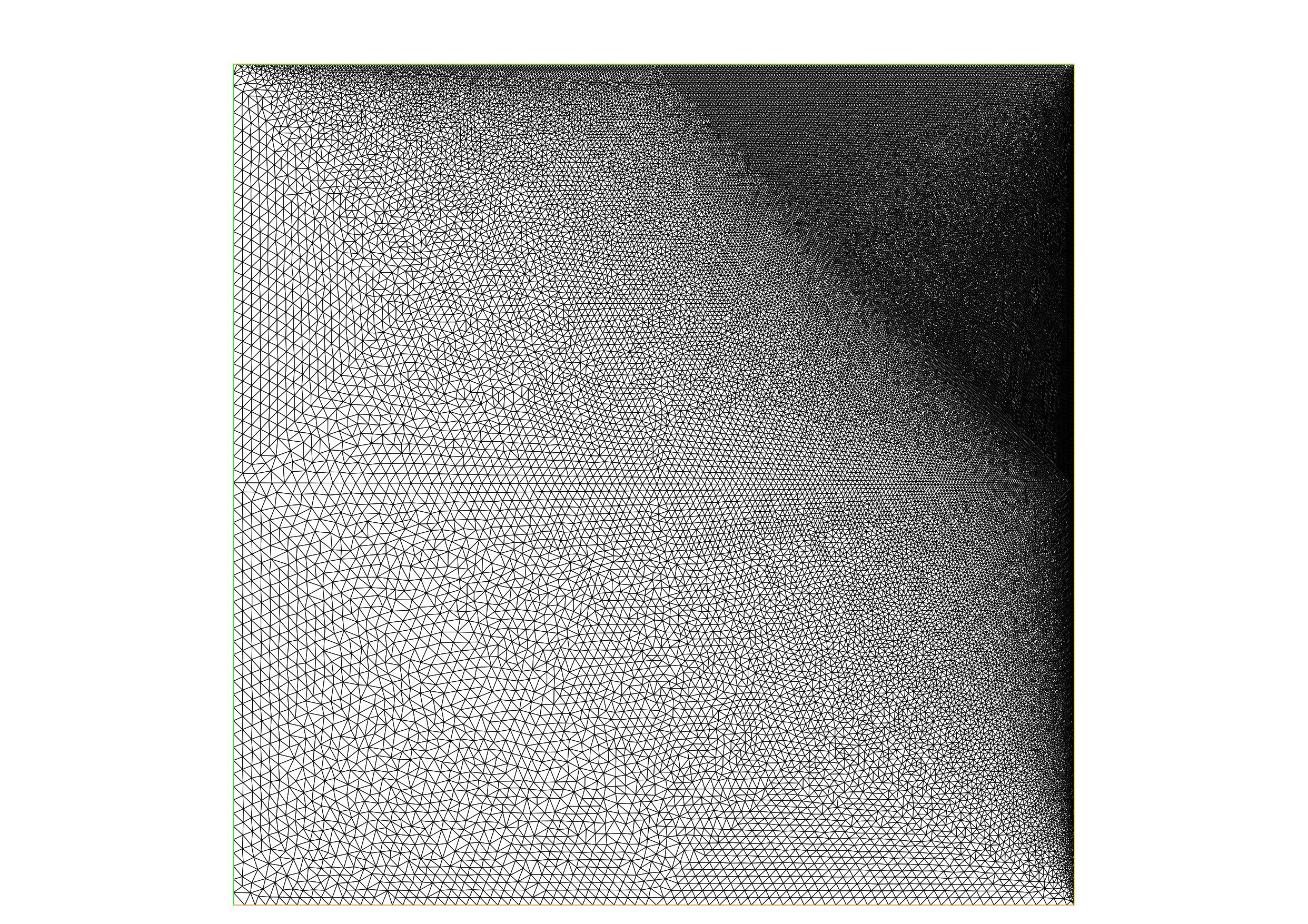}\\
\text{case (1), 486 triangles}&\text{case (2), 7535 triangles}&\text{case (3), 121299 triangles}
\end{matrix}$
}
\caption{non quasi-uniform meshes (see Table \ref{tab:wave7})}
\label{figura3}
\end{figure}

  The huge difference between the two data approximations can be also seen by looking directly at ${\partial}^4_4 u_h$. We report this quantity in  Fig. \ref{figura2}  for the case (a) on a mesh with $h=0.0125$ and time step $\tau=0.025$ at $t=t_4=0.1$. On the left picture (nodal interpolation) we see that ${\partial}^4_4 u_h$ contains a lot of severe spurious oscillations, while the right picture (projection of initial conditions) contains a reasonable and quite smooth approximation of $\cfrac{\partial^4 u}{\partial t^4}$. This is another manifestation of the critical importance of the choice of an approximation of initial conditions and of the right-hand side for our error estimators. We note that such a phenomenon was not observed for the heat equation in \cite{LPP}. We also recall from Table \ref{tab:wave1} that space and time error estimators provide a good representation of the true error on a structured mesh even under the nodal interpolation. Note that the quantity defined by (\ref{N0}) remains also bounded on the structured mesh.

We recall that the theory of Subsection \ref{optimality}, in particular Lemma \ref{bound_AhPhu}, are established under the quasi-uniform meshe assumption. We conclude this article by a numerical test on non quasi-uniform meshes in order to asses the stability of operators $A_h$ and $P_h$. We apply our numerical method to (\ref{wave}) with the exact solution $u$ from case (a) on meshes from  Fig. \ref{figura3}. The results are given in Table \ref{tab:wave7}.  We see that space and time error estimators provide a good representation of the true error, like in examples from Tables \ref{tab:wave1} and \ref{tab:wave6} with quasi-uniform meshes. Moreover, we observe stability for terms $\Vert A_hP_hu^0\Vert_{L^2(\Omega)}$, $\Vert P_hu^0\Vert_{H^1(\Omega)}$, and consequently $N_0$. This indicates that our error indicators may be useful for time and space adaptivity on rather general meshes.

\section{Conclusions}
An a posteriori error estimate in the $L^{\infty}$-in-time/energy-in-space norm is proposed for the wave equation discretized by the Newmark scheme in time and the finite element method in space.  Its reliability is proven theoretically in Theorem \ref{lemest3}. Moreover, numerical experiments show its effectivity. Our estimators are designed  to separate the error coming from discretization in space and that in time and should be therefore useful for time and space adaptivity. We have demonstrated, both theoretically and experimentally, the critical importance of the manner in which the initial conditions and the right-hand side are approximated. Indeed, under nodal interpolation the scheme in itself produces optimal results, bur certain quantities in a posteriori error estimates can blow up with mesh refinement. The remedy for this problem consists in using   orthogonal pro1jections for initial conditions and the right-hand side, cf. Lemma \ref{bound_d4u}. 

\pagebreak

\bibliographystyle{IMANUM-BIB}
\bibliography{apost}

\end{document}